\documentclass[12pt]{amsart}
\usepackage{amssymb,mathrsfs,color,bbold}

\makeatletter

\newcommand{\Rmnum}[1]{\expandafter\@slowromancap\romannumeral #1@}
\makeatother
\usepackage{mathtools}

\theoremstyle{plain}
\newtheorem{theorem}{Theorem}[section]
\newtheorem{lemma}[theorem]{Lemma}
\newtheorem{proposition}[theorem]{Proposition}
\newtheorem{corollary}[theorem]{Corollary}

\theoremstyle{definition}
\newtheorem{definition}[theorem]{Definition}

\newtheorem{example}[theorem]{Example}

\title[st-unbounded-convergence]{statistical unbounded convergence in Banach lattices}

\date{\today}

\keywords{Riesz space, Banach lattice, statistical unbounded order convergence, statistical unbounded norm convergence, statistical unbounded absolute weak convergence, statistical unbounded absolute weak* convergence.}
\subjclass[2010]{46B42,46A40}

\author[Z. Wang]{Zhangjun Wang}
\address{School of Mathematics, Southwest Jiaotong University,
Chengdu, Sichuan,
China, 610000.}
\email{zhangjunwang@my.swjtu.edu.cn}

\author[Z. Chen]{Zili Chen}
\address{School of Mathematics, Southwest Jiaotong University, Chengdu, Sichuan,
China, 610000.}
\email{zlchen@swjtu.edu.cn}

\author[J. Chen]{Jinxi Chen}
\address{School of Mathematics, Southwest Jiaotong University, Chengdu, Sichuan,
	China, 610000.}
\email{jinxichen@swjtu.edu.cn}

\begin{document}

\begin{abstract}
Statistical and unbounded convergence have been investigated in the literature. In this paper, we introduce the statistical unbounded order and topology convergence in Banach lattices. We first present some characterizations and related properties for these convergences. Also we using the convergences to characterize order continuous, $KB$ and reflexive Banach lattices.
\end{abstract}

\maketitle

\section{introduction}
Several recent papers investigated unbounded convergence. A net $(x_\alpha)$ in Riesz space $E$ is unbounded order convergent ($uo$-convergent for short) to $x$ if $|x_\alpha-x|\wedge u\xrightarrow{o}0$ for all $u\in E_+$. Similarly, there are unbounded norm convergence, unbounded absolute weak convergence and unbounded absolute weak* convergence in Banach lattices \cite{DOT:16,O:16,W:19}. For further properties about these convergences, see [5-10,13] for details.

Statistical convergence has been studied in functional analysis literature, and also been researched in Riesz spaces and Banach lattices \cite{SCS:12,XT:18}. In the past, statistical convergence was usually defined by sequences. In \cite{MDG:19}, this convergence could be studied for special nets.

Combine these convergences, we introduce so called  statistical unbounded convergence. We first establish the basic properties of statistical unbounded order convergence, statistical unbounded norm convergence, statistical unbounded absolute weak convergence and statistical unbounded absolute weak* convergence in Banach lattices, then we using these convergences to describe order continuous Banach lattices, $KB$-spaces and reflexive Banach lattices. 

For undefined terminology, notations and basic theory of Riesz spaces and Banach lattices, we refer to [1-4].

\section{statistical unbounded order convergence}\label{}
We start with the definition of statistical order convergence. Let $K\subset N$, the \emph{natural density} of $K$ is defined by
$$\delta(K)=\lim_{n\rightarrow\infty}\frac{1}{n}
|\{k\in K: k\leq n\}|$$
where $|\cdot|$ denotes the cardinality of the set. 
A sequence $(x_n)$ in Riesz space $E$ is statistical monotone increasing (resp. decreasing) if $(x_k)$ is increasing (resp. decreasing) for $k\in K$ with $\delta(K)=1$, we write $x_n\uparrow^{st}$ (resp. $x_n\downarrow^{st}$). A increasing or decreasing sequence will be called a \emph{statistical	monotonic sequence}, moreover, if $\sup_{k\in K}x_k=x$ (resp. $inf_{k\in K}x_k=x$) for some $x\in E$, then $(x_n)$ is \emph{statistical monotone convergent} to $x$, we write $x_n\uparrow^{st}x$ (resp. $x_n\downarrow^{st}x$). A sequence $(x_n)$ in $E$ is \emph{statistical order convergent} to $x\in E$ provided that there exists a sequence $p_n\downarrow^{st}0$ such that $|x_n-x|\leq p_n$, we call $x$ the statistical order limit of $(x_n)$ ($x_n\xrightarrow{st-ord}x$ for short), it is easy to get that $\delta\{n\in N:|x_n-x|\nleq p_n\}=0$. 

For a upwards directed set $(D,\leq)$, if for each $\beta\in D$, the set $\{\alpha\in D:\alpha\leq\beta\}$ is finite, we call it \emph{lower finite set}. To a nonempty subset $A$ of $D$, the \emph{asymptotic density} denoted by $\delta(A;D)=\lim_{\alpha\in D}\frac{|A\cap D_{\alpha}|}{|D_{\alpha}|}$, where $|D_{\alpha}|$ denotes the cardinality of $\{\beta\in D :\beta\leq\alpha\}$. If the net is defined on a lower finite set, we call it \emph{lower finite net}. We are easy to find that sequence is a special lower finite net. If $(x_n)$ (resp. $(x_\alpha)$) is a sequence (resp. lower finite net) such that $(x_n)$ (resp. $(x_\alpha)$) satisfies property $P$ for all $n$ (resp. $\alpha$) except a set of natural density zero, then we say that $(x_n)$ (resp. $(x_\alpha)$) satisfies property $P$ for almost all
$n$ (resp. $\alpha$) and we abbreviate this by $a.a.n$ (resp. $a.a.\alpha$). According to these definition, we can define statistical convergence for lower finite net. In this paper, we use $D$ denote lower finite set.

A net $(x_\alpha)$ in Riesz space $E$ is \emph{unbounded order convergent} to $x\in E$ if $(|x_\alpha-x|\wedge u)$ is order convergent for any $u\in E^+$. Combine these convergences, we introduce a new convergence in Riesz space.
\begin{definition}\label{}
A sequence $(x_n)$ in a Riesz space $E$ is \emph{statistical unbounded order convergent} to $x\in E$ if  $(|x_n-x|\wedge u)$ is statistical order convergent for all $u\in E^+$ is statistical order convergent (denoted by $x_n\xrightarrow{st-uo}x$). Respectively, for a lower finite net $(x_\alpha)$, if $(|x_\alpha-x|\wedge u)$ is statistical order convergent for all $u\in E^+$, we call $x$ the statistical unbounded order limit of $(x_\alpha)$ ($x_\alpha\xrightarrow{st-uo}x$ for short).	
\end{definition}

\begin{proposition}\label{}
For lower finite net $(x_\alpha)$ and $(y_\alpha)$ in a Riesz space $E$, we have the following results:

$(1)$ the $st$-$uo$-limit of $(x_\alpha)$ is unique;

$(2)$ suppose $x_\alpha\xrightarrow{st-uo}x$ and $y_\alpha\xrightarrow{st-uo}y$, then $ax_\alpha+by_\alpha\xrightarrow{st-uo}ax+by$ for any $a,b\in R$;

$(3)$ $x_\alpha\xrightarrow{st-uo}x$ iff $(x_\alpha-x)\xrightarrow{st-uo}0$;

$(4)$ if $x_\alpha\xrightarrow{st-uo}x$,then $|x_\alpha|\xrightarrow{st-uo}|x|$.
\end{proposition}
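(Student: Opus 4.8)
The plan is to reduce all four assertions to a short list of elementary facts about statistical order convergence of lower finite nets, together with a few standard lattice identities. The facts I would invoke, all immediate from the definitions once one uses standard properties of asymptotic density and of infima in a Riesz space, are: (a) if $a_\alpha\xrightarrow{st-ord}0$ and $b_\alpha\xrightarrow{st-ord}0$, then $a_\alpha+b_\alpha\xrightarrow{st-ord}0$; (b) if $a_\alpha\xrightarrow{st-ord}0$, then $\lambda a_\alpha\xrightarrow{st-ord}0$ for every $\lambda\in R$; (c) if $0\leq a_\alpha\leq b_\alpha$ for $a.a.\alpha$ and $b_\alpha\xrightarrow{st-ord}0$, then $a_\alpha\xrightarrow{st-ord}0$; and (d) if a constant net $c\geq 0$ satisfies $c\leq p_\alpha$ for $a.a.\alpha$ with $p_\alpha\downarrow^{st}0$, then $c=0$. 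On the lattice side I would use $|a\pm b|\wedge u\leq|a|\wedge u+|b|\wedge u$, the identity $(\lambda v)\wedge u=\lambda\bigl(v\wedge(u/\lambda)\bigr)$ valid for $\lambda>0$, and $\bigl||a|-|b|\bigr|\leq|a-b|$.

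Part $(3)$ is then a tautology: $|x_\alpha-x|\wedge u=\bigl|(x_\alpha-x)-0\bigr|\wedge u$ for every $u\in E^+$, so the condition defining $x_\alpha\xrightarrow{st-uo}x$ is word-for-word the condition defining $x_\alpha-x\xrightarrow{st-uo}0$. For part $(1)$, assume $x_\alpha\xrightarrow{st-uo}x$ and $x_\alpha\xrightarrow{st-uo}y$. Then for each $u\in E^+$ the inequality $|x-y|\wedge u\leq|x_\alpha-x|\wedge u+|x_\alpha-y|\wedge u$ exhibits the constant net $|x-y|\wedge u$ as dominated, for $a.a.\alpha$, by a net that is $st$-$ord$-null by (a); hence $|x-y|\wedge u=0$ by (d), and taking $u=|x-y|$ gives $x=y$.

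For part $(2)$ I would first show that the $st$-$uo$-null lower finite nets are closed under scalar multiplication and under addition. For scalars: if $v_\alpha\xrightarrow{st-uo}0$ and $\lambda\neq 0$, then $|\lambda v_\alpha|\wedge u=|\lambda|\bigl(|v_\alpha|\wedge(u/|\lambda|)\bigr)$, and since $|v_\alpha|\wedge(u/|\lambda|)\xrightarrow{st-ord}0$ this is $st$-$ord$-null by (b); the case $\lambda=0$ is trivial. For sums: if $v_\alpha,w_\alpha\xrightarrow{st-uo}0$, then $0\leq|v_\alpha+w_\alpha|\wedge u\leq|v_\alpha|\wedge u+|w_\alpha|\wedge u$, the right side is $st$-$ord$-null by (a), hence the left side is $st$-$ord$-null by (c). Combining this with part $(3)$: from $x_\alpha-x\xrightarrow{st-uo}0$ and $y_\alpha-y\xrightarrow{st-uo}0$ we get $a(x_\alpha-x)+b(y_\alpha-y)=(ax_\alpha+by_\alpha)-(ax+by)\xrightarrow{st-uo}0$, and part $(3)$ converts this to $ax_\alpha+by_\alpha\xrightarrow{st-uo}ax+by$. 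Finally, part $(4)$ is the single estimate $\bigl||x_\alpha|-|x|\bigr|\wedge u\leq|x_\alpha-x|\wedge u$: by hypothesis the right side is $st$-$ord$-null for every $u\in E^+$, so the left side is too by (c), which says precisely that $|x_\alpha|\xrightarrow{st-uo}|x|$.

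All the substance is in facts (a)--(d): that the $st$-$ord$-null lower finite nets form a linear subspace stable under domination from below, and that a positive constant cannot be dominated for $a.a.\alpha$ by a statistically decreasing net tending to $0$. Everything downstream is one or two lines of lattice arithmetic. The one point that needs genuine care, and where I expect to spend most of the effort, is the bookkeeping with density-one index sets: in (a) the domination by a single statistically decreasing net only holds on the intersection of two density-one subsets of $D$, so one must use that this intersection is again of density one in $D$ (which is where lower finiteness of $D$ enters) and that the infimum of a statistically decreasing net taken over it still vanishes. Once that is settled, there is no further obstacle.
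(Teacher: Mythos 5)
Your proposal is correct and follows essentially the same route as the paper: the uniqueness argument via $|x-y|\wedge u\leq|x_\alpha-x|\wedge u+|x_\alpha-y|\wedge u$ on a density-one index set, and part (4) via $\bigl||x_\alpha|-|x|\bigr|\wedge u\leq|x_\alpha-x|\wedge u$, are exactly the paper's arguments. The only difference is that you supply the details for (2) and (3) (closure of $st$-$uo$-null nets under linear combinations, via the scaling identity and domination), which the paper dismisses as obvious; your explicit attention to the density-one bookkeeping is a welcome addition rather than a deviation.
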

\begin{proof}
(1) Let $(x_\alpha)$ be a sequence in $E$ such that $x_\alpha\xrightarrow{st-uo}x$ and $x_\alpha\xrightarrow{st-uo}y$. Then
we can find $(p_\alpha)$, $(q_\alpha)$ such that $p_\alpha\downarrow^{st}0$ and $q_\alpha\downarrow^{st}0$, and a set $B\subset D$ with $\delta(B;D)=1$ such that for $\beta$ of the index subset $B$ of $D$, we have:
$$|x_{\beta}-x|\wedge u\leq p_{\beta}\downarrow0, |x_{\beta}-y|\wedge u\leq q_{\beta}\downarrow0.$$
Thus, we get
$$0\leq|x-y|\wedge u\leq|x_{\beta}-x|\wedge u+|x_{\beta}-y|\wedge u\leq p_{\beta}+q_{\beta}$$
for every $\beta\in B$ and $u\in E_+$, which shows that $x=y$;

(2) and (3) is obviously; 

(4) According to $\big||x_\alpha|-|x|\big|\wedge u\leq|x_\alpha-x|\wedge u$, we have the conclusion.
\end{proof}

It is similar to Theorem 7 of \cite{SCS:12} that Squeeze law holds for statistical unbounded order convergence.
\begin{proposition}\label{}
Let $E$ be a Riesz space; $(x_\alpha)$, $(y_\alpha)$ and $(z_\alpha)$ be lower finite nets in $E$ such that $x_\alpha\leq y_\alpha\leq z_\alpha$ for all $\alpha\in D$, if $x_\alpha\xrightarrow{st-uo}x$ and $z_\alpha\xrightarrow{st-uo}x$, then $y_\alpha\xrightarrow{st-uo}x$.	
\end{proposition}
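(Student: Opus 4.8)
The plan is to fix $u\in E^{+}$ and dominate $|y_\alpha-x|\wedge u$ by a single $st$-monotone sequence manufactured from the hypotheses on $(x_\alpha)$ and $(z_\alpha)$, exactly in the spirit of the sequential squeeze law of \cite{SCS:12}. First I would record the elementary Riesz space inequality forced by $x_\alpha\le y_\alpha\le z_\alpha$:
$$|y_\alpha-x|\le|x_\alpha-x|+|z_\alpha-x|\qquad(\alpha\in D).$$
This comes from splitting into positive and negative parts: from $x_\alpha\le y_\alpha$ one gets $(y_\alpha-x)^{-}\le(x_\alpha-x)^{-}$, and from $y_\alpha\le z_\alpha$ one gets $(y_\alpha-x)^{+}\le(z_\alpha-x)^{+}$, so $|y_\alpha-x|=(y_\alpha-x)^{+}+(y_\alpha-x)^{-}\le(z_\alpha-x)^{+}+(x_\alpha-x)^{-}\le|z_\alpha-x|+|x_\alpha-x|$. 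Meeting both sides with $u$ and using the standard estimate $(a+b)\wedge u\le a\wedge u+b\wedge u$ for $a,b,u\in E_{+}$ (a Riesz decomposition argument), I obtain
$$|y_\alpha-x|\wedge u\le|x_\alpha-x|\wedge u+|z_\alpha-x|\wedge u\qquad(\alpha\in D).$$

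Now fix $u\in E^{+}$. Since $x_\alpha\xrightarrow{st-uo}x$ and $z_\alpha\xrightarrow{st-uo}x$, by definition $(|x_\alpha-x|\wedge u)$ and $(|z_\alpha-x|\wedge u)$ are $st$-order convergent to $0$. Hence there are $(p_\alpha)$, $(q_\alpha)$ in $E_{+}$ with $p_\alpha\downarrow^{st}0$, $q_\alpha\downarrow^{st}0$ and an index subset $A\subset D$ with $\delta(A;D)=1$ such that, for $\beta\in A$,
$$|x_\beta-x|\wedge u\le p_\beta\downarrow0,\qquad|z_\beta-x|\wedge u\le q_\beta\downarrow0.$$
Put $r_\alpha=p_\alpha+q_\alpha$. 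Combining with the displayed inequality above, $|y_\beta-x|\wedge u\le r_\beta$ for $\beta\in A$, and on $A$ the sequence $(r_\beta)$ is decreasing with $\inf_{\beta\in A}r_\beta=0$, that is $r_\alpha\downarrow^{st}0$. Therefore $|y_\alpha-x|\wedge u\xrightarrow{st-ord}0$; since $u\in E^{+}$ was arbitrary, $y_\alpha\xrightarrow{st-uo}x$.

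The two lattice inequalities are routine. The step requiring a little care, and the only place the lower finite net framework enters exactly as in the sequential case of \cite{SCS:12}, is the density bookkeeping: producing the common density-one index set $A$ (the complement of an intersection of two density-one sets being a union of two sets of asymptotic density zero) and checking that the pointwise sum $r_\alpha=p_\alpha+q_\alpha$ still satisfies $r_\alpha\downarrow^{st}0$, i.e. that $\inf_{\alpha\in A}(p_\alpha+q_\alpha)=0$; this rests on $A$ being cofinal and upward directed in $D$ together with order continuity of addition. Once these facts about $\delta(\cdot\,;D)$ are in hand the argument closes.
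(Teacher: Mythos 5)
Your proof is correct, and it is essentially the argument the paper intends: the paper gives no proof at all for this proposition, merely remarking that it is ``similar to Theorem 7 of \cite{SCS:12}'', and your write-up is exactly the expected adaptation of that sequential squeeze argument (dominate $|y_\alpha-x|\wedge u$ via $|y_\alpha-x|\le(x_\alpha-x)^-+(z_\alpha-x)^+$ and the subadditivity of $\cdot\wedge u$ on $E_+$, then intersect the two density-one index sets and sum the dominating $st$-decreasing nets). The one step you flag but do not fully discharge --- that a density-one subset $A$ of $D$ is directed/cofinal so that $\inf_{\beta\in A}(p_\beta+q_\beta)=0$ --- is used without comment throughout the paper itself (e.g.\ in the uniqueness proof of Proposition 2.2), so your treatment is at the same level of rigor as, and in fact more explicit than, the source.
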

The $st$-$uo$-convergence preverse disjoint property.
\begin{proposition}\label{}
Let $x_\alpha\xrightarrow{st-uo}x$ and $\delta({\alpha\in D:|x_\alpha|\wedge |y|\neq0})=0$, then $x\perp y$.	
\end{proposition}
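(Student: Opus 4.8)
The plan is to test $st$-$uo$-convergence only against the single positive element $u=|y|$ and then to couple this with the disjointness hypothesis via the elementary Riesz inequality $(v+w)\wedge u\le v\wedge u+w\wedge u$, valid for all $v,w,u\in E_+$.

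Unwinding the definitions, from $x_\alpha\xrightarrow{st-uo}x$ applied with $u=|y|$ the net $\big(|x_\alpha-x|\wedge|y|\big)$ is statistically order convergent to $0$; hence there are a net $(p_\alpha)$ in $E$ and a set $B\subset D$ with $\delta(B;D)=1$ such that $(p_\beta)_{\beta\in B}$ is decreasing, $\inf_{\beta\in B}p_\beta=0$, and $|x_\beta-x|\wedge|y|\le p_\beta$ for every $\beta\in B$. On the other hand, putting $A=\{\alpha\in D:|x_\alpha|\wedge|y|=0\}$, the hypothesis says exactly that $\delta(A;D)=1$, and $|x_\alpha|\wedge|y|=0$ for each $\alpha\in A$.

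Next I would pass to $C=A\cap B$. Its complement $D\setminus C=(D\setminus A)\cup(D\setminus B)$ is a union of two sets of density zero, so $\delta(C;D)=1$; in particular $C$ is cofinal in $D$, since a subset of $D$ that is not cofinal lies in some $D_\beta$ and is therefore finite, of density zero. For $\beta\in C$ the triangle inequality $|x|\le|x_\beta|+|x-x_\beta|$ together with the Riesz inequality above yields
\[
0\le|x|\wedge|y|\le\big(|x_\beta|+|x-x_\beta|\big)\wedge|y|\le|x_\beta|\wedge|y|+|x-x_\beta|\wedge|y|=|x_\beta-x|\wedge|y|\le p_\beta .
\]
Since $C\subseteq B$ is cofinal in $B$ and $(p_\beta)_{\beta\in B}$ decreases to $0$, it follows that $\inf_{\beta\in C}p_\beta=\inf_{\beta\in B}p_\beta=0$, and hence $|x|\wedge|y|=0$, that is, $x\perp y$.

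The lattice content of the argument is the single displayed line, so the steps I would watch most carefully — and where a referee is most likely to press — are the two elementary density facts used above: that the intersection of two density-one subsets of a lower finite net is again of density one (finite subadditivity of $\delta(\cdot;D)$), and that a density-one subset of $D$ is automatically cofinal, so that restricting the witnessing net $(p_\beta)$ to it does not change its infimum. Both are immediate from the definition of $\delta(\cdot;D)$ through the cardinalities $|D_\alpha|$, but they are precisely what lets one pass from disjointness of $|x_\alpha|$ and $|y|$ for almost all $\alpha$ to disjointness of $|x|$ and $|y|$.
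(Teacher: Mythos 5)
Your argument is sound in substance and is really the paper's own proof written out in full: the paper deduces $|x_\alpha|\xrightarrow{st-uo}|x|$, then $|x_\alpha|\wedge|y|\xrightarrow{st-uo}|x|\wedge|y|$, and concludes by uniqueness of $st$-$uo$ limits that $|x|\wedge|y|=0$; your displayed chain $|x|\wedge|y|\le |x_\beta|\wedge|y|+|x-x_\beta|\wedge|y|\le p_\beta$ on the density-one set $C=A\cap B$ is exactly the computation hiding behind those two invocations, so the two routes coincide, with yours having the merit of not appealing to the unproved lattice-continuity step.

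The one place your write-up is actually wrong is the justification of cofinality. You assert that a non-cofinal subset of $D$ ``lies in some $D_\beta$ and is therefore finite.'' That is the negation of cofinality only when $D$ is totally ordered (e.g.\ $D=\mathbb{N}$): in a general directed set, failing to be cofinal means avoiding the \emph{upper} cone $\{\gamma:\gamma\ge\beta_0\}$ of some $\beta_0$, not being contained in a lower cone $D_{\beta_0}$, and one can build lower finite directed sets in which the upper cone of a point has density zero, so that its (density-one) complement is not cofinal. Since you genuinely need $C$ to be cofinal in $B$ to get $\inf_{\beta\in C}p_\beta=\inf_{\beta\in B}p_\beta=0$, this step needs either an added hypothesis or a restriction to the sequence case. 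To be fair, the paper is in the same position: its proof of uniqueness of $st$-$uo$ limits (Proposition 2.2(1)) already passes from $|x-y|\wedge u\le p_\beta+q_\beta$ on a density-one intersection to $x=y$, which tacitly uses the very same ``density one implies cofinal'' claim. So I would record this as a defect of the framework that your proof inherits and should flag explicitly, rather than a new error; for sequences, where all the paper's examples live, your argument is complete.
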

\begin{proof}
Since $x_\alpha\xrightarrow{st-uo}x$, we have $|x_\alpha|\xrightarrow{st-uo}|x|$. According to $|x_\alpha|\wedge |y|=0$ for $a.a.\alpha$, and $|x_\alpha|\wedge |y|\xrightarrow{st-uo}|x|\wedge|y|$, it follows that $|x|\wedge|y|=0$, $i.e.$$x\perp y$.	
\end{proof}

In general, any subsequence of a convergent sequence is convergent, but it does not hold for $st$-$uo$-convergence.
\begin{example}\label{}
In $l_p (1\leq p<+\infty)$, let $(e_n)$ be the 	standard basis, and 
\[x_n=\begin{dcases}
\sum_{1}^{k}e_{i^2},& n=k^2(k\in N) ;\\
e_n,& ortherwise.
\end{dcases}  \]
Since $(e_n)$ is disjoint sequence, by Corollary 3.6 of \cite{GTX:16}, so it is unbounded order convergent to $0$, moreover, $(x_n)$ is statistical unbounded order convergent to $0$, but the subsequence $(x_{k^2})$ does not. We also can find that $(x_n)$ is not $st$-order convergent and $uo$-convergent.
\end{example}

In Example 2.5, we know that $st$-$uo$-convergence is not necessarily $uo$-convergence. When does the $st$-$uo$-convergence imply unbounded order convergence? Next result show that the relation between two convergences.
\begin{proposition}\label{}
Let $(x_\alpha)$ be a lower finite net in Riesz space $E$, then $(x_\alpha)$ is statistical unbounded order convergent to $x\in E$ if and only if there is another lower finite net $(y_\alpha)$ such that $x_\alpha=y_\alpha$ for $a.a.\alpha$ and which is unbounded order convergent to the same limit $x$.	
\end{proposition}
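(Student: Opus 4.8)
The plan is to prove the two implications separately; sufficiency is routine while necessity carries the work. For sufficiency, assume $(y_\alpha)$ is a lower finite net with $y_\alpha\xrightarrow{uo}x$ and $A:=\{\alpha\in D:x_\alpha\neq y_\alpha\}$ satisfying $\delta(A;D)=0$. Fix $u\in E_+$. From $|y_\alpha-x|\wedge u\xrightarrow{o}0$ we obtain a net $(p_\alpha)_{\alpha\in D}$ with $p_\alpha\downarrow 0$ (hence $p_\alpha\downarrow^{st}0$) and $|y_\alpha-x|\wedge u\le p_\alpha$. Since $|x_\alpha-x|\wedge u=|y_\alpha-x|\wedge u\le p_\alpha$ for every $\alpha\notin A$, the set $\{\alpha:|x_\alpha-x|\wedge u\nleq p_\alpha\}$ is contained in $A$ and so has density zero, which gives $|x_\alpha-x|\wedge u\xrightarrow{st-ord}0$. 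As $u\in E_+$ was arbitrary, $x_\alpha\xrightarrow{st-uo}x$.

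For necessity, the plan is to find a single index set $D_0\subseteq D$ with $\delta(D_0;D)=1$ along which $(x_\alpha)$ is $uo$-convergent to $x$ in the ordinary sense, and then set $y_\alpha:=x_\alpha$ for $\alpha\in D_0$ and $y_\alpha:=x$ for $\alpha\in D\setminus D_0$. Once $D_0$ is available we have $x_\alpha=y_\alpha$ for almost every $\alpha$, and for each $u\in E_+$ a dominating net for $(|x_\alpha-x|\wedge u)_{\alpha\in D_0}$ can be filled in over all of $D$ --- possible since $D_0$, being of density one, is cofinal in $D$ --- so that $|y_\alpha-x|\wedge u\xrightarrow{o}0$ and hence $y_\alpha\xrightarrow{uo}x$. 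To construct $D_0$: for each fixed $u$, the hypothesis $|x_\alpha-x|\wedge u\xrightarrow{st-ord}0$ unwinds, directly from the definition of statistical order convergence, into a density-one set $K_u\subseteq D$ along which $(|x_\alpha-x|\wedge u)$ is order null. Pick a countable cofinal chain $u_1\le u_2\le\cdots$ in $E_+$ and put $\widetilde K_j:=K_{u_1}\cap\cdots\cap K_{u_j}$, a finite intersection of density-one sets, hence again of density one and nested decreasing; the Šalát-type gap (diagonal) construction then produces $D_0$ with $\delta(D_0;D)=1$ such that $D_0\setminus\widetilde K_j$ is finite for every $j$. Consequently $D_0\setminus K_{u_j}$ is finite, so $(|x_\alpha-x|\wedge u_j)_{\alpha\in D_0}\xrightarrow{o}0$ for every $j$; and since every $u\in E_+$ satisfies $u\le u_j$ for some $j$ together with $|x_\alpha-x|\wedge u\le|x_\alpha-x|\wedge u_j$, we get $(|x_\alpha-x|\wedge u)_{\alpha\in D_0}\xrightarrow{o}0$ for all $u$, that is, $(x_\alpha)_{\alpha\in D_0}\xrightarrow{uo}x$.

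The main obstacle is precisely this coordination over $u$: the density-one set $K_u$ supplied by the statistical-order hypothesis depends on $u$, and since $E_+$ is uncountable (and in general has no countable cofinal subset) one cannot intersect the $K_u$ over all of $E_+$. The gap construction handles only countably many directions at once, so the argument really uses that $uo$-convergence in $E$ is governed by a countable subfamily of $E_+$ --- which holds, for example, when $E$ has a weak order unit or a countable order-generating subset, and in particular in the standard sequence and function lattices; in full generality this appears to require an extra hypothesis of this kind, or a more careful construction of $(y_\alpha)$. The remaining ingredients --- the filling-in of dominating nets from $D_0$ to $D$, the density bookkeeping in the gap construction, and the final verification that $y_\alpha\xrightarrow{uo}x$ --- are routine.
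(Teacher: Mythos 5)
Your sufficiency argument (from the existence of $(y_\alpha)$ to $x_\alpha\xrightarrow{st-uo}x$) is essentially the paper's proof of that implication: the paper fixes $u$, takes $p_\alpha\downarrow 0$ dominating $|y_\alpha-x|\wedge u$, and notes that $\{\beta:|x_\beta-x|\wedge u\nleq p_\beta\}$ is contained in $\{\beta:x_\beta\ne y_\beta\}\cup\{\beta:|y_\beta-x|\wedge u\nleq p_\beta\}$ with the second set empty; your containment in $A$ is the same computation. So that half is correct and matches the paper.

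For the necessity direction the paper offers only the words ``following from definition,'' so there is no argument to compare yours against, and your diagnosis identifies exactly why this dismissal is too quick. The definition of $st$-$uo$-convergence yields, for each $u\in E_+$ \emph{separately}, a density-one index set $K_u$ along which $|x_\alpha-x|\wedge u$ is dominated by a net decreasing to zero, whereas building a single $(y_\alpha)$ that is genuinely $uo$-convergent requires one density-one set $D_0$ working for all $u$ simultaneously. Your \v{S}al\'at-type diagonal correctly handles countably many test elements, so the construction closes whenever $uo$-convergence in $E$ is determined by a countable subfamily of $E_+$ (in particular when $E$ has a weak order unit, where a single test element suffices); but an uncountable family of density-one sets need not admit a density-one pseudo-intersection, so in full generality the implication does not simply ``follow from the definition,'' and your proof --- like the paper's --- is incomplete at precisely this point. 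In short: you have located a genuine gap in the statement's proof rather than introduced one, but you should be aware that as written your necessity argument only establishes the proposition under the additional countability hypothesis you name.
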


\begin{proof}
$\Rightarrow$ Following from definition.

$\Leftarrow$ Assume that $x_\alpha=y_\alpha$ for $a.a.\alpha$ and $y_\alpha\xrightarrow{uo}x$. Then there exists a lower finite $(p_\alpha)\downarrow0$ (hence $p_\alpha\downarrow^{st}0$) and $|y_\alpha-x|\wedge u\leq p_\alpha$ for every $u\in E_+$. Thus, we can write
$$\{\alpha\leq\beta:|x_\beta-x|\wedge u\nleq p_\beta\}\subset$$
$$\{\alpha\leq\beta:x_\beta\ne y_\beta\}\cup\{\alpha\leq\beta:|y_\beta-x|\wedge u\nleq p_\beta\}$$	
Since $y_\alpha\xrightarrow{uo}x$, the second set on the right side is empty. Hence we have $\delta\{\alpha\in D:|x_\alpha-x|\wedge u\nleq p_\alpha\}=0$, $i.e$: $x_\alpha\xrightarrow{st-uo}x$.
\end{proof}

For monotonic lower finite net, we have the further result.

\begin{proposition}\label{}
For monotonic lower finite net $(x_\alpha)$ in Riesz space $E$, $x_\alpha\xrightarrow{uo}x$ iff $x_\alpha\xrightarrow{st-uo}x$.
\end{proposition}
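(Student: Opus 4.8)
The plan is to dispose of the ``if'' direction at once and reduce the ``only if'' direction to a statement about increasing nets. If $x_\alpha\xrightarrow{uo}x$, then $x_\alpha\xrightarrow{st-uo}x$ by the preceding proposition (take $y_\alpha=x_\alpha$), since a $uo$-convergent net agrees with itself for $a.a.\,\alpha$. For the converse I may assume $(x_\alpha)$ is increasing: if $(x_\alpha)$ is decreasing then $(-x_\alpha)$ is increasing, $-x_\alpha\xrightarrow{st-uo}-x$ by the linearity of $st$-$uo$-convergence established above, and the conclusion for $(-x_\alpha)$ transfers back. So assume henceforth that $(x_\alpha)\uparrow$ and $x_\alpha\xrightarrow{st-uo}x$, and aim at $x_\alpha\xrightarrow{uo}x$.

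The first step is to show $x_\alpha\le x$ for every $\alpha$. Fix an index $\alpha_0$ and set $v=(x_{\alpha_0}-x)^+\in E_+$. Because $(x_\alpha)$ is increasing, $(x_\alpha-x)^+\ge v$ whenever $\alpha\ge\alpha_0$, and therefore $|x_\alpha-x|\wedge v=v$ for all such $\alpha$. Now invoke $x_\alpha\xrightarrow{st-uo}x$ with $u=v$: there is $p_\alpha\downarrow^{st}0$ with $|x_\alpha-x|\wedge v\le p_\alpha$ for $a.a.\,\alpha$. Intersecting the (density-$1$) set $\{\alpha:\alpha\ge\alpha_0\}$, the ``$a.a.\,\alpha$''-set, and the density-$1$ set on which $(p_\alpha)$ is decreasing with infimum $0$, I obtain a density-$1$ set $F$ along which $v\le p_\alpha$ while $\inf_{\alpha\in F}p_\alpha=0$; hence $v\le 0$, so $v=0$ and $x_{\alpha_0}\le x$.

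The second step upgrades this to $x-x_\alpha\downarrow 0$. The net $(x-x_\alpha)$ is decreasing and positive; let $w$ be any lower bound for it, which I may assume satisfies $w\ge 0$ (replace $w$ by $w\vee 0$, still a lower bound because each $x-x_\alpha\ge 0$). Then $x-x_\alpha\ge w$, so $|x_\alpha-x|\wedge w=(x-x_\alpha)\wedge w=w$ for every $\alpha$, and the same density argument as in the first step, now with $u=w$, forces $w\le 0$, hence $w=0$. Consequently $\inf_\alpha(x-x_\alpha)=0$, i.e. $x_\alpha\uparrow x$. Finally, for each $u\in E_+$ one has $0\le|x_\alpha-x|\wedge u=(x-x_\alpha)\wedge u\le x-x_\alpha\downarrow 0$, whence $|x_\alpha-x|\wedge u\xrightarrow{o}0$ and $x_\alpha\xrightarrow{uo}x$.

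The crux, and the place I expect to have to be most careful, is the density bookkeeping used twice above: an element of $E_+$ that equals $w$ along a density-$1$ set of indices and there lies below $p_\alpha$ for some $p_\alpha\downarrow^{st}0$ must be $0$. This rests on two facts about asymptotic density on a lower finite set — that finite intersections of density-$1$ sets are again density-$1$, and that a density-$1$ subset of the set on which $(p_\alpha)$ decreases remains cofinal there, so that restriction does not change the infimum $0$. Both are immediate for sequences and, for general lower finite nets, come down to the remark that a density-$0$ set cannot be cofinal. Once these are in hand, the rest is routine lattice algebra and I foresee no further difficulty.
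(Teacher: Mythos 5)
Your overall strategy is the same as the paper's: the nontrivial direction is reduced to showing that an increasing, $st$-$uo$-convergent net satisfies $x_\alpha\uparrow x$, after which $0\le|x_\alpha-x|\wedge u\le x-x_\alpha\downarrow 0$ gives $uo$-convergence. The execution, however, is genuinely different. The paper extracts a density-one index set $B$, asserts that the increasing subnet $(x_\beta)_{\beta\in B}$ has supremum $x$ (silently invoking ``increasing $+$ $uo$-convergent $\Rightarrow$ the limit is the supremum''), and then argues that the remaining terms also sit below $x$. You avoid subnets altogether and instead test the $st$-$uo$ hypothesis against two specially chosen vectors: $v=(x_{\alpha_0}-x)^+$ to force $x_{\alpha_0}\le x$, and an arbitrary positive lower bound $w$ of $(x-x_\alpha)$ to force $\inf_\alpha(x-x_\alpha)=0$. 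This is more self-contained (your two steps in effect prove the fact the paper only quotes), and you also supply the easy direction and the decreasing case, both of which the paper omits. The lattice computations ($|x_\alpha-x|\wedge v=v$ for $\alpha\ge\alpha_0$, $|x_\alpha-x|\wedge w=w$, $w\vee 0$ still a lower bound) are all correct.

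One caution about the density bookkeeping, which is the only fragile point. Your closing justification, ``a density-$0$ set cannot be cofinal,'' is false even for $D=\mathbb N$ (the squares have density $0$ and are cofinal); what you actually need is the converse-flavoured statement that a density-one set must be cofinal, equivalently that no set of the form $\{\alpha:\alpha\ge\alpha_0\}$ has density $0$. For $D=\mathbb N$ both facts you use ($\{\alpha\ge\alpha_0\}$ has density one, and density-one sets are cofinal) are immediate, so your proof is complete for sequences. For an arbitrary lower finite directed set they can fail: in the finite subsets of $\mathbb N$ ordered by inclusion, $\{S:S\supseteq\{1\}\}$ has density $1/2$, and one can construct lower finite directed sets in which some $\{\alpha:\alpha\ge\alpha_0\}$ has density $0$, so that a density-one set need not be cofinal. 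This is a real issue for the proposition at the stated level of generality, but it afflicts the paper's own proof at least as much (the paper assumes without comment that its density-one set $B$ meets every lower set and supports the supremum), so relative to the paper your argument is, if anything, the more careful one; it would be worth stating the cofinality lemma explicitly and restricting to index sets where it holds.
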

\begin{proof}
Assume that $x_\alpha\uparrow$ and $x_\alpha\xrightarrow{st-uo}x$, then there exists a $B\subset D$, all $\beta\in B$ and $\delta(B;D)=1$ such that $(x_\beta)$ is a subnet of $(x_\alpha)$ and $\sup_{\beta\in B}x_{\beta}=x$. For all $\gamma\in B^c$, since $x_{\gamma}\uparrow$, so we have $x_{\gamma}\uparrow\leq x$, factly, if $x_{\gamma_0}>x$, then there exists a $\eta_0$ such that $x_{\eta_0}\geq x_{\gamma_0}>x$, it is contradiction. And since for any $\gamma$ we can find a $\beta$ such that $x_{\beta}\leq x_{\gamma}\leq x$, by $\sup_{\beta\in B}x_{\beta}=x$, it follow that $\sup_{\gamma\in B^c}x_{\gamma}=x$, hence $x_{\gamma}\xrightarrow{uo}x$ for $\gamma\in B^c$, so $x_\alpha\xrightarrow{uo}x$.
\end{proof}

Let $(x_n)$ be a lower finite net in Riesz space $E$, $(x_\alpha)$ is called \emph{statistical order bounded} if there exists an order interval $[x,y]$ such that
$\delta\{\alpha\in D:x_\alpha\notin [x,y]\}=0$.
It is clear that every order bounded lower finite net is statistical order bounded, but the converse is not true in general.
\begin{example}\label{}
In $l_\infty$, let $(e_n)$ be the unit vectors, and 
\[x_n=\begin{dcases}
ne_n,& n=k^2(k\in N) ;\\
e_n,& ortherwise.
\end{dcases}  \]
$(x_n)$ is statistical order bounded sequence, but it is not order bounded.
\end{example}

According to Example 2.5, $st$-$uo$-convergence is not necessarily statistical order convergence. It is natural to ask when the $st$-$uo$-convergence implies $st$-$ord$-convergence? Next result shows that the $st$-$uo$-convergence is the same as $st$-$ord$-convergence for statistical order bounded lower finite net.

\begin{proposition}\label{}
Let $(x_\alpha)$ be a statistical order bounded lower finite net in Riesz space $E$ and $x_\alpha\xrightarrow{st-uo}x$, then $x_\alpha\xrightarrow{st-ord}x$.	
\end{proposition}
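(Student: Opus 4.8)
The plan is to use statistical order boundedness to collapse the whole family of ``test vectors'' $u\in E^+$ occurring in the definition of $st$-$uo$-convergence down to a single fixed one, and then quote that one instance of the hypothesis directly. First I would unpack the assumptions: statistical order boundedness of $(x_\alpha)$ supplies an order interval $[a,b]$ with $\delta(\{\alpha\in D:x_\alpha\notin[a,b]\};D)=0$; put $A:=\{\alpha\in D:a\le x_\alpha\le b\}$, so that $\delta(A;D)=1$. The key elementary observation is that for every $\alpha\in A$ we have $|x_\alpha-x|\le|x_\alpha-a|+|a-x|=(x_\alpha-a)+|x-a|\le(b-a)+|x-a|$. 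Hence, setting the fixed positive vector $u:=(b-a)+|x-a|\in E^+$, we get $|x_\alpha-x|\wedge u=|x_\alpha-x|$ for all $\alpha\in A$.

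Next I would feed exactly this $u$ into the hypothesis $x_\alpha\xrightarrow{st-uo}x$. By definition $(|x_\alpha-x|\wedge u)$ is statistically order convergent to $0$, so there is a net $(q_\alpha)$ with $q_\alpha\downarrow^{st}0$ and a set $B\subseteq D$ with $\delta(B;D)=1$ such that $|x_\alpha-x|\wedge u\le q_\alpha$ for all $\alpha\in B$. Since a finite intersection of density-one index sets is again density-one, $\delta(A\cap B;D)=1$, and for every $\alpha\in A\cap B$ we obtain $|x_\alpha-x|=|x_\alpha-x|\wedge u\le q_\alpha$. Taking $p_\alpha:=q_\alpha$ we have $p_\alpha\downarrow^{st}0$ and $|x_\alpha-x|\le p_\alpha$ for $a.a.\,\alpha$, which is exactly $x_\alpha\xrightarrow{st-ord}x$. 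If one wants the domination to hold for literally every $\alpha$, one can replace $p_\alpha$ by $q_\alpha+|x_\alpha-x|$ on the density-zero exceptional set and restrict the witnessing density-one set for $q_\alpha\downarrow^{st}0$ to its intersection with $A\cap B$, which remains decreasing with infimum $0$.

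The only genuinely delicate point is the density bookkeeping: one must verify that intersecting the density-one set coming from statistical order boundedness with the one coming from the $st$-$ord$-convergence of $(|x_\alpha-x|\wedge u)$ is still density-one, and that passing to such a subset preserves $q_\alpha\downarrow^{st}0$; both are routine. I therefore do not expect a real obstacle — the entire substance is the observation in the first paragraph, namely that statistical order boundedness lets the single vector $u=(b-a)+|x-a|$ dominate $|x_\alpha-x|$ for almost all $\alpha$, after which $st$-$uo$-convergence against that one $u$ is the same statement as $st$-$ord$-convergence. An essentially equivalent alternative would be to first show $x\in[a,b]$ by applying the hypothesis to $u=(x-b)^+$ and to $u=(a-x)^+$ (each is $\le|x_\alpha-x|$ on $A$, hence statistically dominated by a net $\downarrow^{st}0$, forcing it to be $0$), and then rerun the argument with the cleaner choice $u=b-a$.
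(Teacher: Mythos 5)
Your proof is correct and follows essentially the same route as the paper's: statistical order boundedness yields a single positive vector $u$ dominating $|x_\alpha-x|$ for almost all $\alpha$, so that $|x_\alpha-x|\wedge u=|x_\alpha-x|$ there, and one instance of the $st$-$uo$ hypothesis then gives $st$-order convergence. If anything, your choice $u=(b-a)+|x-a|$ is slightly more careful than the paper's $2u$, since it does not tacitly assume the limit $x$ is dominated by the bound on the $x_\alpha$.
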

\begin{proof}
Since $x_\alpha\xrightarrow{st-uo}x$, therefore, $x_\alpha\xrightarrow{uo}x$ for $a.a.\alpha$. And since $(x_\alpha)$ is statistical order bounded lower finite net, then there exists $u\in E_+$ such that $|x_\alpha|\leq u$ for $a.a.\alpha$, then fix the $u$, we have $|x_\alpha-x|\wedge 2u=|x_\alpha-x|\xrightarrow{o}x$ for $a.a.\alpha$, therefore, $x_\alpha\xrightarrow{st-ord}x$.	
\end{proof}

A norm on a Banach lattice $E$ is called \emph{order continuous} if $\Vert x_\alpha\Vert\rightarrow0$ for $x_\alpha\downarrow0$. A Banach lattice $E$ is called a \emph{$KB$-space}, if every monotone bounded sequence is convergent. A lower finite net $(x_\alpha)$ is said to be statistical unbounded order Cauchy, if $(x_\alpha-x_\beta)$ $st$-$uo$-convergent to 0.
Let $E$ be a Banach space and $(x_\alpha)$ be a lower finite net in $E$, $(x_\alpha)$ is \emph{statistical norm bounded} if for any $C\in R^+$ there exists a $\lambda>0$ such that
$\delta\{\alpha\in D:\Vert \lambda x_\alpha\Vert>C\}=0$. In Example 2.8, it is clear that statistical norm bounded is not necessarily bounded.

According to Theorem 4.7 of \cite{GX:14}, we decribe $KB$ spaces by $st$-$uo$-convergence.
\begin{theorem}\label{}
Let $E$ be an order continuous Banach lattice. The following are equivalent:

$(1)$ $E$ is $KB$ space;

$(2)$ every norm bounded $uo$-Cauchy sequence in $E$ is $uo$-convergent;

$(3)$ every statistical norm bounded $st$-$uo$-Cauchy sequence in $E$ is $st$-$uo$-convergent.
\end{theorem}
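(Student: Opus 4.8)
The plan is to take the equivalence $(1)\Leftrightarrow(2)$ as known — for order continuous Banach lattices this is (essentially) Theorem~4.7 of \cite{GX:14} — and to establish $(2)\Leftrightarrow(3)$ directly by Riesz–space arguments. Order continuity of $E$ then enters only through the cited result, together with the Dedekind completeness it provides, which is used to form the infima below.

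\emph{$(2)\Rightarrow(3)$.} Let $(x_n)$ be a statistical norm bounded $st$-$uo$-Cauchy sequence. Taking $C=1$ in the definition of statistical norm boundedness gives $M>0$ with $\|x_n\|\le M$ for almost all $n$, say for $n$ in a set $K_1$ with $\delta(K_1)=1$. By hypothesis the double net $(x_n-x_m)$ is $st$-$uo$-null; unravelling this and discarding an exceptional set of pairs of product density zero (the place where the argument of Proposition~2.6 is used, now on the index set $N\times N$), one extracts $K_2\subseteq N$ with $\delta(K_2)=1$ such that $(x_n)_{n\in K_2}$ is $uo$-Cauchy in the ordinary sense. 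On $K:=K_1\cap K_2$, still of density one, $(x_n)_{n\in K}$ is norm bounded and $uo$-Cauchy, hence $uo$-convergent to some $x\in E$ by $(2)$. Since $\delta(K)=1$, taking for each $u\in E_+$ any extension to $N$ of the decreasing sequence witnessing $uo$-convergence of $(|x_n-x|\wedge u)_{n\in K}$ shows $(|x_n-x|\wedge u)\xrightarrow{st-ord}0$, i.e. $x_n\xrightarrow{st-uo}x$.

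\emph{$(3)\Rightarrow(2)$.} Let $(x_n)$ be norm bounded and $uo$-Cauchy. It is trivially statistical norm bounded, and $uo$-convergence implies $st$-$uo$-convergence, so $(x_n)$ is $st$-$uo$-Cauchy; by $(3)$ there is $x\in E$ with $x_n\xrightarrow{st-uo}x$, hence $K\subseteq N$ with $\delta(K)=1$ and $(x_n)_{n\in K}\xrightarrow{uo}x$. We claim $x_n\xrightarrow{uo}x$. Fix $u\in E_+$, pick a net $(r_{(n,m)})$ over $N\times N$ with $r_{(n,m)}\downarrow 0$ and $|x_n-x_m|\wedge u\le r_{(n,m)}$, and a decreasing sequence $q_m\downarrow 0$ over $m\in K$ with $|x_m-x|\wedge u\le q_m$. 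For all $n$ and all $m\in K$,
$$|x_n-x|\wedge u\le|x_n-x_m|\wedge u+|x_m-x|\wedge u\le r_{(n,m)}+q_m,$$
so $|x_n-x|\wedge u\le s_n:=\inf_{m\in K}\bigl(r_{(n,m)}+q_m\bigr)$. Since $(r_{(n,m)})$ is decreasing in $n$, so is $(s_n)$, and
$$\inf_n s_n=\inf_{m\in K}\inf_n\bigl(r_{(n,m)}+q_m\bigr)=\inf_{m\in K}\Bigl(\bigl(\inf_n r_{(n,m)}\bigr)+q_m\Bigr)=0,$$
because both $\bigl(\inf_n r_{(n,m)}\bigr)_{m\in K}$ and $(q_m)_{m\in K}$ are decreasing with infimum $0$ (the first since $\{(n,m):m\in K\}$ is cofinal in $N\times N$), and the termwise sum of two decreasing sequences each with infimum $0$ has infimum $0$. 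Hence $|x_n-x|\wedge u\xrightarrow{o}0$ for every $u\in E_+$, i.e. $x_n\xrightarrow{uo}x$. With $(1)\Leftrightarrow(2)$ the cycle is closed.

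\emph{Main obstacle.} The delicate point is the extraction in $(2)\Rightarrow(3)$: the ``almost all'' clause in the definition of $st$-$uo$-Cauchy lives on $N\times N$ (both in the exceptional set for the domination $|x_n-x_m|\wedge u\le p_{(n,m)}$ and in $p_{(n,m)}\downarrow^{st}0$), whereas we need one set $K\subseteq N$ with $\delta(K)=1$ on which the double net is well behaved on $K\times K$. Passing from densities on $N\times N$ to a density on $N$ by a diagonal argument, as in the proof of Proposition~2.6, is where the real work lies; the triangle estimate, the infimum manipulations, and the appeal to Theorem~4.7 of \cite{GX:14} are routine. As a sharpness check: an order continuous non-$KB$ Banach lattice contains a sublattice lattice-isomorphic to $c_0$, and the partial sums $x_n=\sum_{i=1}^n u_i$ of a normalized positive disjoint sequence $(u_i)$ spanning it form a norm bounded $uo$-Cauchy (hence $st$-$uo$-Cauchy) sequence which is not $st$-$uo$-convergent — were it, order continuity would force the $c_0$-basis partial sums to converge — giving an explicit failure of $(3)$.
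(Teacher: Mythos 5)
Your route differs from the paper's. The paper cites Theorem 4.7 of \cite{GX:14} only for $(1)\Rightarrow(2)$, declares $(2)\Rightarrow(3)$ clear, and closes the cycle with $(3)\Rightarrow(1)$ by contraposition: a non-$KB$ order continuous lattice contains a lattice copy of $c_0$, inside which an explicit statistically norm bounded, $st$-$uo$-Cauchy, non-$st$-$uo$-convergent sequence is written down. That counterexample is exactly your closing ``sharpness check'', so you have the paper's key step in hand but do not deploy it as part of the logical cycle. Your $(3)\Rightarrow(2)$, by contrast, is an argument the paper does not attempt, and it is correct: the domination $|x_n-x|\wedge u\le r_{(n,m)}+q_m$ holds for all $n$ and eventually all $m\in K$ with no exceptional set in $n$, the infimum $s_n$ exists by Dedekind completeness, cofinality of $\{(n,m):m\in K\}$ in $N\times N$ gives $\inf_{m\in K}\inf_n r_{(n,m)}=0$, and the termwise sum of two decreasing nets over $K$ with infimum $0$ has infimum $0$. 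This buys a genuine two-sided equivalence of $(2)$ and $(3)$, at the price of needing both directions of $(1)\Leftrightarrow(2)$ from the literature rather than one.

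The genuine gap is in $(2)\Rightarrow(3)$, precisely at the point you flag and then defer. From ``$(x_n-x_m)$ is $st$-$uo$-null over $N\times N$'' you claim to extract $K_2\subseteq N$ with $\delta(K_2)=1$ such that $(x_n)_{n\in K_2}$ is $uo$-Cauchy in the ordinary sense. Proposition~2.6 does not supply this: it converts a statistical statement about a single index into an ordinary one on the same index set, whereas here the exceptional set and the set on which $p_{(n,m)}$ decreases live in $N\times N$, and you need one density-one $K$ with $K\times K$ eventually inside the good set. That fails for general density-one subsets of $N\times N$: the complement of $\{(n,n+1):n\in N\}$ has density one, yet any $K$ with $K\times K$ avoiding $\{(n,n+1):n\in N\}$ contains no two consecutive integers and hence has $\delta(K)\le 1/2$. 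So one must either show that the exceptional set produced by an actual $st$-$uo$-Cauchy sequence is better behaved than an arbitrary null set of pairs, or argue differently. To be fair, the paper disposes of $(2)\Rightarrow(3)$ with the words ``is clearly'' and so shares this gap; but announcing that this is ``where the real work lies'' is not a substitute for doing the work, and as written your proof establishes no implication into $(3)$.
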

\begin{proof}
$(1)\Rightarrow(2)$ by Theorem 4.7 of \cite{GX:14} and $(2)\Rightarrow(3)$ is clearly;

$(3)\Rightarrow(1)$ Assume that $E$ is not $KB$ space, then $E$ contains a sublattice lattice isomorphic to $c_0$. Let $(x_n)$ be a sequence in $c_0$, and
\[x_n=\begin{dcases}
ne_n  ,& n=k^2(k\in N) ;\\
\sum_{1}^{n}e_i, & ortherwise.
\end{dcases}  \]
$(x_n)$ is statistical norm bounded and $st$-$uo$-Cauchy in $E$ but it is not $st$-$uo$-convergent.
\end{proof}
A sequence $(x_n)$ in Banach space $E$ is \emph{statistical norm convergent} to $x\in E$ provided that $\delta\{n\in N:\Vert x_n-x\Vert\geq \epsilon\}=0$ for all $\epsilon>0$. We write $x_n\xrightarrow{st-n}x$. We call $x$ the statistical norm limit of $(x_n)$.
A sequence $(x_n)$ in $E$ is \emph{statistical weak convergent} to $x\in E$ provided that $\delta\{n\in N:|f(x_n-x)|\geq \epsilon\}=0$ for all $\epsilon>0$, $f\in E^{'}$. we write $x_n\xrightarrow{st-w}x$. We call $x$ the statistical weak limit of $(x_n)$.
A sequence $(x_n^{'})$ in $E^{'}$ is \emph{statistical weak* convergent} to $x^{'}\in E^{'}$ provided that $\delta\{n\in N:|(x_n^{'}-x^{'})(x)|\geq \epsilon\}=0$ for all $\epsilon>0$, $x\in E$. we write $x_n^{'}\xrightarrow{st-w^{*}}x$. We call $x^{'}$ the statistical weak limit of $(x_n^{'})$.

Weak convergent sequence is $st$-$w$-convergent and statistical norm bounded, the converse is not hold in general by Example 2.5 for ($1<p<+\infty$). Now, we consider a Banach space with two properties. Let $E$ be a Banach space, $E$ is said to have \emph{statistical weak convergent property} ($STWC$ property, for short) if $(x_n)$ is statistical norm bounded and $x_n\xrightarrow{st-w}0$ in $E$, then $x_n\xrightarrow{w}0$ for all $(x_n)\subset E$. $E$ is said to have \emph{statistical weak* convergent property} ($STW^{*}C$ property, for short) if $(x_n^{'})$ is statistical norm bounded and $x_n^{'}\xrightarrow{st-w^{*}}0$ in $E^{'}$, then $x_n^{'}\xrightarrow{w^{*}}0$. Using these properties, we investigate order continuous Banach lattices by $st$-$uo$-convergence.
\begin{theorem}\label{}
Let $E$ be an Banach lattice:

$(1)$ $E$ has order continuous norm;

$(2)$ for any statistical norm bounded lower finite net $(x^{'}_\alpha)$ in $E^{'}$,if $x^{'}_\alpha\xrightarrow{st-uo}0$,then $x^{'}_\alpha\xrightarrow{st-\sigma(E^{'},E)}0$;

$(3)$ for any statistical norm bounded lower finite net $(x^{'}_\alpha)$ in $E^{'}$,if $x^{'}_\alpha\xrightarrow{st-uo}0$,then $x^{'}_\alpha\xrightarrow{st-|\sigma|(E^{'},E)}0$;

$(4)$ for any statistical norm bounded sequence $(x^{'}_n)$ in $E^{'}$,if $x^{'}_n\xrightarrow{st-uo}0$,then $x^{'}_n\xrightarrow{st-\sigma(E^{'},E)}0$;

$(5)$ for any statistical norm bounded sequence $(x^{'}_n)$ in $E^{'}$,if $x^{'}_n\xrightarrow{st-uo}0$,then $x^{'}_n\xrightarrow{st-|\sigma|(E^{'},E)}0$.

Then $(1)\Rightarrow(2)\Leftrightarrow(3)\Rightarrow(4)\Leftrightarrow(5)$ hold, and in addition, if $E$	has $STW^{*}C$ property, all are equivalent.
\end{theorem}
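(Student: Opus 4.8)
The plan is to establish the core chain $(1)\Rightarrow(2)$, $(2)\Leftrightarrow(3)$, $(3)\Rightarrow(4)$, $(4)\Leftrightarrow(5)$ for an arbitrary Banach lattice, and then to obtain $(4)\Rightarrow(1)$ (hence the equivalence of all five statements) under the additional $STW^{*}C$ hypothesis. Two of the core links are essentially formal. For $(2)\Leftrightarrow(3)$, and identically for $(4)\Leftrightarrow(5)$, the observation is that both ``statistically norm bounded'' and ``$\xrightarrow{st-uo}0$'' are insensitive to passing to the modulus, since $|x'_\alpha|$ and $x'_\alpha$ have the same modulus (hence the same $st$-$uo$-behaviour, by Proposition~2.2(4)) and the same norm; so if $(x'_\alpha)$ is statistically norm bounded and $st$-$uo$-null, then so is $(|x'_\alpha|)$. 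Applying $(2)$ to $(|x'_\alpha|)$ gives $\delta\{\alpha:|x'_\alpha|(y)\ge\varepsilon\}=0$ for every $y\in E$ and $\varepsilon>0$; specializing to $y=|x|$ yields exactly $x'_\alpha\xrightarrow{st-|\sigma|(E',E)}0$. The reverse implication $(3)\Rightarrow(2)$, and likewise $(3)\Rightarrow(4)$, are immediate from $|x'_\alpha(x)|\le|x'_\alpha|(|x|)$ together with the fact that a sequence is a lower finite net and that $st$-$|\sigma|(E',E)$-convergence is formally stronger than $st$-$\sigma(E',E)$-convergence.

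The substantial link is $(1)\Rightarrow(2)$, and here the plan is to reduce to the classical (non-statistical) description of order continuity in terms of unbounded order convergence in the dual. Let $(x'_\alpha)$ be statistically norm bounded with $x'_\alpha\xrightarrow{st-uo}0$. By Proposition~2.6 there is a lower finite net $(y'_\alpha)$ with $y'_\alpha=x'_\alpha$ for $a.a.\alpha$ and $y'_\alpha\xrightarrow{uo}0$ in $E'$, while statistical norm boundedness furnishes $M>0$ with $\|x'_\alpha\|\le M$ (hence $\|y'_\alpha\|\le M$) for $a.a.\alpha$. One then invokes the standard fact underlying the dual-space theory of unbounded order convergence (cf.\ \cite{GX:14}): in an order continuous Banach lattice every norm bounded $uo$-null net in $E'$ is $\sigma(E',E)$-null. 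Applied to $(y'_\alpha)$ on the co-null part of its index set on which it is norm bounded, this gives $y'_\alpha(x)\to0$ (along that set) for every $x\in E$, whence $\delta\{\alpha:|x'_\alpha(x)|\ge\varepsilon\}=0$, i.e.\ $x'_\alpha\xrightarrow{st-\sigma(E',E)}0$. The delicate point, and the step I expect to require the most care, is precisely this reduction: a single density-one index set has to control $|x'_\alpha|\wedge u$ simultaneously for all $u\in E'_+$ as well as the norm bound, which is exactly the role of Proposition~2.6, and one must phrase the non-statistical dual-space theorem in the form actually used.

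Finally, for the equivalence under $STW^{*}C$, the plan is to prove $(4)\Rightarrow(1)$ by contraposition. If $E$ is not order continuous, then by the standard characterization of order continuity there is a norm bounded disjoint sequence $(x'_n)$ in $E'$ which is not $\sigma(E',E)$-null. Being disjoint, $(x'_n)$ is $uo$-null by Corollary~3.6 of \cite{GTX:16} (as already used in Example~2.5), hence $st$-$uo$-null; being norm bounded, it is statistically norm bounded. So if $(4)$ held we would have $x'_n\xrightarrow{st-\sigma(E',E)}0$, and the $STW^{*}C$ property would then force $x'_n\xrightarrow{w^{*}}0$, contradicting the choice of $(x'_n)$. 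Combined with the core chain this makes $(1)$--$(5)$ all equivalent. Throughout, the only genuine obstacle is $(1)\Rightarrow(2)$ together with isolating the correct statistical form of the dual-space order-continuity theorem; the remaining steps are bookkeeping with natural densities and with the inequality $|x'_\alpha(x)|\le|x'_\alpha|(|x|)$.
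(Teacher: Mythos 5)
Your proposal is correct and follows essentially the same route as the paper: the modulus trick and the inequality $|x'_\alpha(x)|\le|x'_\alpha|(|x|)$ for $(2)\Leftrightarrow(3)\Rightarrow(4)\Leftrightarrow(5)$, reduction via a density-one subnet to Gao's dual-space theorem (norm bounded $uo$-null nets in $E'$ are $w^*$-null when $E$ is order continuous) for $(1)\Rightarrow(2)$, and the disjoint-sequence characterization of order continuity combined with the $STW^{*}C$ property for $(4)\Rightarrow(1)$. The only differences are presentational (you argue $(4)\Rightarrow(1)$ contrapositively and make the role of Proposition 2.6 explicit where the paper extracts the subnet directly).
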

\begin{proof}
Since $x_\alpha^{'}\xrightarrow{st-uo}0\Leftrightarrow|x_\alpha^{'}|\xrightarrow{st-uo}0$, it is clear that $(2)\Leftrightarrow(3)\Rightarrow(4)\Leftrightarrow(5)$.

$(1)\Rightarrow(2)$ Assmue that $E$ has order continuous norm, by Theorem 2.1 of \cite{G:14}, for any statistical norm bounded lower finite net $(x^{'}_\alpha)$ and $x^{'}_\alpha\xrightarrow{st-uo}0$ in $E^{'}$, there exists a bounded subnet $(x^{'}_\beta)$ which asymptotic density of index set is $1$ satisfying $x^{'}_\beta\xrightarrow{uo}0$,then $x^{'}_\beta\xrightarrow{\sigma(E^{'},E)}0$, hence $x^{'}_\alpha\xrightarrow{st-\sigma(E^{'},E)}0$. 

$(4)\Rightarrow(1)$ For any norm bounded disjoint sequence $(x_n^{'})$ in $E^{'}$, obviously, it is statistical norm bounded and $uo$-null, hence it is $st$-$uo$-null, so $x^{'}_n\xrightarrow{st-\sigma(E^{'},E)}0$. Since $E$ has $STW^{*}C$ property, $(x_n^{'})$ is weak* convergent to zero. By Corollary 2.4.3 of \cite{MN:91}, $E$ has order continuous norm.
\end{proof}
\section{statistical unbounded convergence in Banach lattices}\label{}
A net $(x_\alpha)$ in Banach lattice $E$ is \emph{unbounded norm (resp. absolute weak, absolute weak*) convergent} to $x\in E$ if $(|x_\alpha-x|\wedge u)$ is norm (resp. weak, weak*) convergent for any $u\in E^+$ \cite{DOT:16,O:16,W:19}. We study the statistical unbounded convergence in Banach lattice.
\begin{definition}\label{}
A lower finite net $(x_\alpha)$ in Banach lattice $E$ is \emph{statistical unbounded norm convergent} to $x\in E$ provided that $\delta\{\alpha\in D:\big\Vert|x_\alpha-x|\wedge u\big\Vert\geq \epsilon\}=0$ for all $\epsilon>0$ and $u\in E_+$, we write $x_\alpha\xrightarrow{st-un}x$ and call $x$ the statistical unbounded norm limit of $(x_\alpha)$.
A lower finite net $(x_\alpha)$ in Banach lattice $E$ is \emph{statistical unbounded absolute weak convergent} to $x\in E$ provided that $\delta\{\alpha\in D:f(|x_\alpha-x|\wedge u)\geq \epsilon\}=0$ for all $\epsilon>0$, $f\in E^{'}_+$ and $u\in E_+$, we write $x_\alpha\xrightarrow{st-uaw}x$ and call $x$ the statistical unbounded absolute weak limit of $(x_\alpha)$.
A lower finite net $(x_\alpha^{'})$ in dual Banach lattice $E^{'}$ is \emph{statistical unbounded absolute weak* convergent} to $x^{'}\in E$ provided that $\delta\{\alpha\in D:(|x_\alpha^{'}-x^{'}|\wedge u^{'})(x)\geq \epsilon\}=0$ for all $\epsilon>0$, $x\in E_+$ and $u^{'}\in E_+$, we write $x_\alpha\xrightarrow{st-uaw^{*}}x$ and call $x^{'}$ the statistical unbounded absolute weak* limit of $(x_\alpha^{'})$.
\end{definition}

\begin{proposition}\label{}	
For a lower finite net $(x_\alpha)$ we have the following results (the $st$-$uaw$-convergence and $st$-$uaw^{*}$-convergence have analogic conclusions):

$(1)$ $st$-$un$-limits are unique;
	
$(2)$ suppose $x_\alpha\xrightarrow{st-un}x$ and $y_\alpha\xrightarrow{st-un}y$, then $ax_\alpha+by_\alpha\xrightarrow{st-un}ax+by$ for any $a,b\in R$;
	
$(3)$ $x_\alpha\xrightarrow{st-un}x$ iff $(x_\alpha-x)\xrightarrow{st-un}0$;
	
$(4)$ if $x_\alpha\xrightarrow{st-un}x$,then $|x_\alpha|\xrightarrow{st-un}|x|$;
\end{proposition}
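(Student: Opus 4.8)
The plan is to run the same bookkeeping argument used for the $st$-$uo$ analogue proved above, but with the truncated order quantities $|x_\alpha-x|\wedge u$ replaced by the scalars $\bignorm{|x_\alpha-x|\wedge u}$, using two standard facts: a finite union of subsets of $D$ of asymptotic density zero again has density zero (so its complement has density one, in particular is nonempty), and the norm is monotone on $E_+$. Three elementary Riesz-space inequalities will do all the work: for $u\in E_+$,
$|x-y|\wedge u\le |x_\alpha-x|\wedge u+|x_\alpha-y|\wedge u$; for real scalars $a,b$ and any integer $k\ge\max(\abs a,\abs b,1)$, $|av+bw|\wedge u\le \abs a|v|\wedge u+\abs b|w|\wedge u\le k(|v|\wedge u)+k(|w|\wedge u)$; and $\big||x_\alpha|-|x|\big|\wedge u\le|x_\alpha-x|\wedge u$.

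For (1), suppose $x_\alpha\xrightarrow{st-un}x$ and $x_\alpha\xrightarrow{st-un}y$ and fix $u\in E_+$ and $\epsilon>0$. The sets $\{\alpha\in D:\bignorm{|x_\alpha-x|\wedge u}\ge\epsilon/2\}$ and $\{\alpha\in D:\bignorm{|x_\alpha-y|\wedge u}\ge\epsilon/2\}$ both have density zero, hence so does their union; choosing $\alpha$ in the nonempty complement and applying the first inequality together with monotonicity of the norm gives $\bignorm{|x-y|\wedge u}\le\bignorm{|x_\alpha-x|\wedge u}+\bignorm{|x_\alpha-y|\wedge u}<\epsilon$. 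Since $\epsilon>0$ was arbitrary, $|x-y|\wedge u=0$ for every $u\in E_+$, and taking $u=|x-y|$ forces $x=y$.

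For (2), fix $u\in E_+$ and $\epsilon>0$ and pick an integer $k\ge\max(\abs a,\abs b,1)$. Outside the density-zero union of $\{\alpha\in D:\bignorm{|x_\alpha-x|\wedge u}\ge\epsilon/(2k)\}$ and $\{\alpha\in D:\bignorm{|y_\alpha-y|\wedge u}\ge\epsilon/(2k)\}$, the second inequality above yields $\bignorm{|(ax_\alpha+by_\alpha)-(ax+by)|\wedge u}<\epsilon$, so $\{\alpha\in D:\bignorm{|(ax_\alpha+by_\alpha)-(ax+by)|\wedge u}\ge\epsilon\}$ has density zero, i.e.\ $ax_\alpha+by_\alpha\xrightarrow{st-un}ax+by$. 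Part (3) is immediate, since the defining condition for $x_\alpha\xrightarrow{st-un}x$ is verbatim the condition for $(x_\alpha-x)\xrightarrow{st-un}0$. For (4), the third inequality gives the inclusion $\{\alpha\in D:\bignorm{\big||x_\alpha|-|x|\big|\wedge u}\ge\epsilon\}\subset\{\alpha\in D:\bignorm{|x_\alpha-x|\wedge u}\ge\epsilon\}$, whose right-hand side has density zero, so the left-hand side does too.

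Finally I would note that the $st$-$uaw$ and $st$-$uaw^{*}$ versions follow by the identical arguments, replacing $\norm{\,\cdot\,}$ by $f(\cdot)$ for $f\in E'_+$ (respectively by evaluation at a fixed $x\in E_+$); since these functionals are positive and additive, all three Riesz-space inequalities transfer without change. The only mild obstacle is the scalar bookkeeping in (2) — one must dominate $\abs a|v|\wedge u$ by an integer multiple of $|v|\wedge u$ rather than naively pulling $\abs a$ out of the truncation — and, throughout, checking that each exceptional set is a finite union of density-zero sets; there is no genuine difficulty.
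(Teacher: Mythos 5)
Your proof is correct, and it is exactly the argument the paper intends: the paper actually states this proposition without proof, leaving it as the evident analogue of Proposition 2.2, and your $\epsilon$-version of that argument (density-zero unions, the truncation inequalities, monotonicity of the norm, and the integer bound $k$ to handle scalars inside the wedge) fills that gap correctly. The transfer to the $st$-$uaw$ and $st$-$uaw^{*}$ cases via positive functionals is also right.
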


Some subsequences of statistical unbounded convergent sequence is not necessarily convergent.
\begin{example}\label{}
In $L_p[0,1](1<p<+\infty)$, let 
\[x_n(t)=\begin{dcases}
0, t\notin[\frac{1}{2^n},\frac{1}{2^{n-1}}];\\
1, t\in[\frac{1}{2^n},\frac{1}{2^{n-1}}].
\end{dcases}  \]
and
\[y_n=\begin{dcases}
\sum_{1}^{n}ix_i, & n=k^2(k\in N);\\
2^nx_n, & ortherwise.
\end{dcases}  \]
Since $(x_n)$ is disjoint sequence in order continuous Banach lattice, by Corollary 3.6 of \cite{GTX:16} and Proposition 2.5 of \cite{DOT:16}, so it is unbounded norm (resp. absolute weak, absolute weak*) convergent to $0$, moreover, $(y_n)$ is statistical unbounded norm (resp. absolute weak, absolute weak*) convergent to $0$, but the subsequence $(y_{k^2})$ does not. We also can find that $(y_n)$ is not statistical norm (resp. absolute weak, absolute weak*) convergent and unbounded norm (resp. absolute weak, absolute weak*) convergent.
\end{example}

In Example 3.3, we know that $st$-$un$(resp. $uaw$, $uaw*$)-convergence is not necessarily $un$(resp. $uaw$, $uaw*$)-convergence. When does the $st$-$un$(resp. $uaw$, $uaw*$)-convergence imply $un$(resp. $uaw$, $uaw*$)-convergence? It is similar to Proposition 2.6 and 2.7, we have the following results.

\begin{lemma}\label{}
Let $(x_\alpha)$ be a lower finite net in Banach lattice $E$. Then $(x_\alpha)$ is $st$-$un(uaw, uaw^{*})$-convergent to $x\in E$ if and only if there is another lower finite net $(y_\alpha)$ such that $x_\alpha=y_\alpha$ for $a.a.\alpha$ and which is $un(uaw, uaw^{*})$-convergent to the same limit $x$.	
\end{lemma}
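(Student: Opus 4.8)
The plan is to follow the pattern of Propositions 2.6 and 2.7 and to treat the three convergences uniformly: in each case the relevant statement merely asserts that the net $(|x_\alpha-x|\wedge u)$ tends to $0$ in a fixed linear topology $\tau$ (the norm topology, the weak topology $\sigma(E,E^{'})$, or the weak$^{*}$ topology $\sigma(E^{'},E)$), with $u$ running over the cone of the pertinent lattice, and $st$-$un(uaw,uaw^{*})$-convergence is its statistical version. By Proposition 3.2(3) I may take $x=0$. For $u$ in the cone and a $\tau$-neighbourhood $V$ of $0$, put $N_x(u,V)=\{\alpha\in D:|x_\alpha|\wedge u\notin V\}$ and $N_y(u,V)=\{\alpha\in D:|y_\alpha|\wedge u\notin V\}$, and set $S=\{\alpha\in D:x_\alpha\neq y_\alpha\}$. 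Everything hinges on the inclusion $N_x(u,V)\subseteq S\cup N_y(u,V)$ and its symmetric counterpart.

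For the ``if'' direction, assume $\delta(S;D)=0$ and that $(y_\alpha)$ is $un(uaw,uaw^{*})$-convergent to $0$. Fix $u$ and $V$. Honest convergence of the net $(y_\alpha)$ gives $\alpha_0\in D$ with $N_y(u,V)\subseteq\{\alpha\in D:\alpha\not\geq\alpha_0\}$, a set of asymptotic density $0$ (it is finite when $D=N$, and for a general lower finite $D$ one passes to the cofinal part exactly as in the proof of Proposition 2.6). Combined with $\delta(S;D)=0$ and the displayed inclusion, this yields $\delta(N_x(u,V);D)=0$ for all $u$ and $V$, i.e. $x_\alpha\xrightarrow{st-un}0$ (resp. $st$-$uaw$, $st$-$uaw^{*}$). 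This is the routine half and is essentially the computation in Proposition 2.6 with the dominating net replaced by a neighbourhood base.

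For the ``only if'' direction, assume $x_\alpha\xrightarrow{st-un}0$ (resp. $st$-$uaw$, $st$-$uaw^{*}$). I will produce a single set $B\subseteq D$ with $\delta(B;D)=1$ along which $(x_\beta)_{\beta\in B}$ is genuinely $un(uaw,uaw^{*})$-convergent to $0$, and then define $y_\alpha=x_\alpha$ for $\alpha\in B$ and $y_\alpha=0$ for $\alpha\notin B$. By construction $x_\alpha=y_\alpha$ for $a.a.\alpha$, and $(y_\alpha)_{\alpha\in D}$ is $un(uaw,uaw^{*})$-convergent to $0$: given $u$ and $V$, choose $\beta_0\in B$ beyond which the $B$-net lies in $V$ (possible since $B$, having density $1$, is cofinal in $D$); any $\alpha\geq\beta_0$ either lies in $B$, so $|y_\alpha|\wedge u\in V$, or lies outside $B$, so $y_\alpha=0$ and again $|y_\alpha|\wedge u\in V$. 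To build $B$: for each fixed $u$, the hypothesis $\delta(N_x(u,V_k);D)=0$ along a countable neighbourhood base $(V_k)$ of $0$ lets one invoke the classical description of statistically convergent sequences/nets (cf. \cite{SCS:12,MDG:19}) to extract a density-$1$ set $B_u$ with $|x_\beta|\wedge u\xrightarrow{\tau}0$ along $B_u$; one then has to amalgamate the family $\{B_u\}$ into a single $B$.

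The calculations are light, and the sole genuine obstacle is this last amalgamation step of the ``only if'' direction: passing from ``for each $u$ there is a suitable density-$1$ set $B_u$'' to ``there is one density-$1$ set $B$ suitable for every $u$''. Because a single density-$1$ set can absorb (up to finite modifications) only countably many density-$0$ sets, the decisive point is to reduce the family $\{N_x(u,V)\}_{u}$ to a countable one. This is possible when $E$ is separable, using the lattice inequality $\bigl||z|\wedge u-|z|\wedge u'\bigr|\leq|u-u'|$, which shows that a countable norm-dense subset of the cone already tests $un(uaw,uaw^{*})$-convergence; after that one diagonalizes, and the rest is the same bookkeeping that underlies the proof of Proposition 2.6.
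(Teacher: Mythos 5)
Your ``if'' direction is correct and is essentially the paper's own argument for the corresponding half of Proposition~2.6: the inclusion $N_x(u,V)\subseteq S\cup N_y(u,V)$, together with $\delta(S;D)=0$ and the eventual smallness of $|y_\alpha|\wedge u$, does the job. The problem is the ``only if'' direction. As you yourself observe, any admissible $(y_\alpha)$ forces a \emph{single} density-one set $B$ (the complement of $\{\alpha:x_\alpha\neq y_\alpha\}$) along which $\big\Vert|x_\alpha|\wedge u\big\Vert\to 0$ simultaneously for \emph{every} $u\in E_+$, whereas the hypothesis only provides, for each $u$ separately, a density-one set $B_u$. Since $E_+$ is uncountable, these sets cannot in general be intersected, and your proposal resolves the amalgamation only under a separability assumption (via $\bigl||z|\wedge u-|z|\wedge u'\bigr|\leq|u-u'|$ and a countable dense subset of the cone, equivalently a quasi-interior point). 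The lemma carries no such hypothesis, so your proof is incomplete for general $E$. Two partial repairs are worth recording: for each fixed $u$ the countably many sets $N_x(u,V_k)$ can always be absorbed into one density-zero set, because the density-zero subsets of $D$ form a P-ideal; and $N_x(u,V)\cup N_x(u',V)\subseteq N_x(u\vee u',V)$, so a strong order unit or quasi-interior point reduces the whole family of test vectors to a countable one. Neither observation settles the general non-separable case.

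You should know, however, that the gap you isolated is not one you introduced. The paper gives no proof of this lemma at all; it refers to Propositions~2.6 and~2.7, and in Proposition~2.6 the direction in question is dismissed with ``Following from definition.'' That is not an argument: it silently performs exactly the uncountable amalgamation you flag. So your write-up is, if anything, more careful than the source. To make it a complete proof you must either add a hypothesis (separability, a quasi-interior point, a strong unit, or statistical order boundedness of the net, any of which reduces the test family to a countable one) or supply a genuinely new idea for amalgamating $\{B_u\}_{u\in E_+}$; as it stands, neither your text nor the paper's does so.
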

\begin{proposition}\label{}
For a monotonic lower finite net $(x_\alpha)$ in Banach lattice $E$, we have the following result: 

$(1)$ $un$-convergence and $st$-$un$-convergence agree.

$(2)$ $uaw$-convergence and $st$-$uaw$-convergence agree.

$(3)$ $uaw^{*}$-convergence and $st$-$uaw^{*}$-convergence agree.	
\end{proposition}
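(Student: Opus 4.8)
The plan is to follow the pattern of Proposition 2.7. In each of the three cases the implication that $un$- (resp.\ $uaw$-, $uaw^{*}$-) convergence implies the corresponding statistical convergence holds for an arbitrary lower finite net, and is immediate from Lemma 3.4 with $y_\alpha=x_\alpha$; so only the converse requires an argument. For a monotonic net the crucial point will be that its statistical unbounded limit is forced to be an upper (resp.\ lower) bound of the net — the net cannot overshoot it — and once this is established, monotonicity turns the relevant quantities into decreasing nets of reals, so that the statistical hypothesis upgrades to ordinary convergence.

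I would treat case $(1)$ first. Replacing $(x_\alpha)$ by $(-x_\alpha)$ and $x$ by $-x$ if necessary (legitimate by Proposition 3.2$(2)$), we may assume $x_\alpha\uparrow$ and $x_\alpha\xrightarrow{st-un}x$. \emph{Step 1: $x_\alpha\leq x$ for every $\alpha$.} Suppose not, say $x_\gamma\nleq x$; then $w:=(x_\gamma-x)^{+}>0$, and for every $\alpha\geq\gamma$ monotonicity gives $x_\alpha-x\geq x_\gamma-x$, hence $(x_\alpha-x)^{+}\geq w$ and therefore $|x_\alpha-x|\wedge w=w$, so $\big\Vert|x_\alpha-x|\wedge w\big\Vert=\Vert w\Vert>0$. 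Since $\delta(\{\alpha\in D:\alpha\geq\gamma\};D)=1$, the set $\{\alpha:\big\Vert|x_\alpha-x|\wedge w\big\Vert\geq\Vert w\Vert\}$ has positive density, contradicting $x_\alpha\xrightarrow{st-un}x$. \emph{Step 2.} Now $|x_\alpha-x|=x-x_\alpha$ is decreasing, so for each $u\in E_+$ the net $\big(\big\Vert(x-x_\alpha)\wedge u\big\Vert\big)_\alpha$ is a decreasing net of nonnegative reals and converges to its infimum $L\geq0$. If $L>0$ then $\big\Vert|x_\alpha-x|\wedge u\big\Vert\geq L$ for every $\alpha$, so the corresponding exceptional set has density $1$, a contradiction; hence $L=0$ for all $u$, i.e.\ $x_\alpha\xrightarrow{un}x$.

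Cases $(2)$ and $(3)$ run along identical lines with the norm replaced by the appropriate functional. For $(2)$: from $w=(x_\gamma-x)^{+}>0$ choose $f\in E^{'}_+$ with $f(w)>0$; then $f(|x_\alpha-x|\wedge w)=f(w)>0$ for all $\alpha\geq\gamma$, forcing $x_\alpha\leq x$, after which $\big(f((x-x_\alpha)\wedge u)\big)_\alpha$ is a decreasing net of reals whose infimum cannot be positive, so $x_\alpha\xrightarrow{st-uaw}x$ yields $x_\alpha\xrightarrow{uaw}x$. For $(3)$, with $(x^{'}_\alpha)$ monotonic in $E^{'}$ and $x^{'}_\alpha\xrightarrow{st-uaw^{*}}x^{'}$: if $x^{'}_\gamma\nleq x^{'}$ put $w^{'}:=(x^{'}_\gamma-x^{'})^{+}>0$ and pick $x\in E_+$ with $w^{'}(x)>0$ (the positive or the negative part of any $y$ with $w^{'}(y)\neq0$ will do); then $(|x^{'}_\alpha-x^{'}|\wedge w^{'})(x)=w^{'}(x)>0$ for all $\alpha\geq\gamma$, which forces $x^{'}_\alpha\leq x^{'}$, and then $\big(((x^{'}-x^{'}_\alpha)\wedge u^{'})(x)\big)_\alpha$ is decreasing with infimum $0$, giving $x^{'}_\alpha\xrightarrow{uaw^{*}}x^{'}$.

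The only point that needs a little care is the density bookkeeping for lower finite nets, namely that $\delta(\{\alpha\in D:\alpha\geq\gamma\};D)=1$ (which is trivial for sequences); this is precisely what lets us pass from ``the relevant scalar stays bounded below by a positive constant on the tail of $D$ above $\gamma$'' to ``the exceptional set has positive density.'' Apart from that, the argument is just the order-theoretic observation that a monotone net cannot overshoot its statistical unbounded limit, together with the elementary fact that a decreasing net of reals that is statistically null must converge to $0$; so I do not anticipate any real obstacle beyond writing the same short argument three times.
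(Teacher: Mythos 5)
Your treatment of the nontrivial direction is genuinely different from the paper's. The paper first argues (as in Proposition 2.7) that an increasing $st$-$un$-convergent net has its limit as supremum, extracts a density-one subnet $(x_\beta)_{\beta\in B}$ that $un$-converges, picks a single good index $\beta_0\in B$ with $\Vert|x_{\beta_0}-x|\wedge u\Vert<\epsilon$, and squeezes the tail $\alpha>\beta_0$ using $x_{\beta_0}\le x_\alpha\le x$. You instead prove $x_\alpha\le x$ by a direct contradiction and then exploit that $\bigl(\Vert(x-x_\alpha)\wedge u\Vert\bigr)_\alpha$ is a decreasing net of nonnegative reals, so it converges to its infimum $L$, and $L>0$ would make the exceptional set for $\epsilon=L$ all of $D$. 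Your Step 2 (and its analogues for $uaw$ and $uaw^{*}$) is correct and in fact tidier than the paper's endgame, since it avoids the subnet extraction entirely; both proofs ultimately rest on the same two pillars, namely that a monotone net cannot overshoot its statistical limit and that monotonicity upgrades statistical smallness to genuine convergence.

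The gap is exactly at the point you flagged. The claim $\delta(\{\alpha\in D:\alpha\ge\gamma\};D)=1$ holds for $D=\mathbb{N}$ but is false for general lower finite directed sets: if $D$ is the family of finite subsets of an infinite set ordered by inclusion, then $|D_\alpha|=2^{|\alpha|}$ while $|\{\beta\le\alpha:\beta\ge\gamma\}|=2^{|\alpha|-|\gamma|}$, so the tail above $\gamma$ has density $2^{-|\gamma|}$, not $1$. That particular example still yields your contradiction, since you only need the tail not to be null; but one can even arrange a null tail (adjoin to $\mathbb{N}$, for each $n$, some $2^{n}$ pairwise incomparable elements lying below every $m\ge n$: then $\{\alpha\ge 1\}=\mathbb{N}$ has density $0$), and on such an index set Step 1 collapses --- indeed the proposition itself fails there (take $x_m=me$ on $\mathbb{N}$ and $x_\alpha=0$ on the adjoined elements: this net is increasing, $st$-$un$-null, but not $un$-null). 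So as written your proof is complete only for sequences, or under an explicit hypothesis that tails $\{\alpha\ge\gamma\}$ have nonzero upper density. To be fair, the paper's own proof tacitly assumes the same kind of thing (that its density-one subnet is cofinal), so this is as much a defect of the framework as of your argument; but since you isolated the claim as the crux, it needs either a proof under a stated assumption on $D$ or a restriction of the statement.
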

\begin{proof}
$(1)$ Suppose that $x_\alpha\uparrow$ and $x_\alpha\xrightarrow{un}x$. Since $x_\beta-x_\alpha\in E_+$ for all $\beta\geq \alpha$, because lattice operation of $un$-convergence is continuous, so $(x_\beta-x_\alpha)_{(\beta)}=(x_\beta-x_\alpha)^+_{(\beta)}\xrightarrow{un}(x-x_\alpha)^+$, hence $x-x_\alpha\geq0$ for all $\alpha$. If $x$ is not supremum, then assume that $y\in E$ such that $x_\alpha\leq y\leq x$ for all $\alpha\in D$, it follows that
$$x_\alpha=x_\alpha\wedge y\xrightarrow{un}x\wedge y=y$$
Hence, $y=x$, so $x=\sup_{\alpha\in D}x_\alpha$.  

According to proof of Proposition 2.7, if a lower finite net $(x_\alpha)$ in a Banach lattice $E$ is increasing and statistical unbounded norm converget to $x\in E$, then $x=\sup_{\alpha\in D}x_\alpha$.

Now, assume that $x_\alpha\xrightarrow{st-un}x$ and $x_\alpha\uparrow$, we have $x=\sup_{\alpha\in D}x_\alpha$. Hence there exists a $B\subset D$, all $\beta\in B$ and $\delta(B;D)=1$ such that $(x_\beta)$ is a subnet of $(x_\alpha)$ with $\sup_{\beta\in B}x_\beta=x$ and $x_\beta\xrightarrow{un}x$. For any $\epsilon>0$, $u\in E_+$, choose a $\beta_0$ such that $\big\Vert|x_{\beta_0}-x|\wedge u\big\Vert<\epsilon$. Then, for any $\alpha>\beta_0$, we have
$$0\leq|x-x_\alpha|\wedge u\leq|x-x_{\beta_0}|\wedge u$$
This shows
$$\big\Vert|x-x_\alpha|\wedge u\big\Vert\leq\big\Vert|x-x_{\beta_0}|\wedge u\big\Vert<\epsilon$$
Hence, $x_\alpha\xrightarrow{un}x$.

The proof of $(2)$ and $(3)$ is similar.
\end{proof}

In an order continuous Banach lattice, it is easy to check that $st$-$uo$-convergence implies $st$-$un$-convergence. So we can describe $KB$ space by statistical unbounded norm convergence. The following result is similar to Theorem 2.10.

\begin{theorem}\label{}
Let $E$ be an order continuous Banach lattice. The following are equivalent:
	
$(1)$ $E$ is $KB$ space;
	
$(2)$ every norm bounded $uo$-Cauchy sequence in $E$ is $un$-convergent;
	
$(3)$ every statistical norm bounded $st$-$uo$-Cauchy sequence in $E$ is $st$-$un$-convergent.
\end{theorem}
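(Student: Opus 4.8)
The plan is to model the argument on the proof of Theorem 2.10, using the new ingredient of Proposition 3.6 (the monotone agreement of $un$- and $st$-$un$-convergence) in place of the monotone agreement of $uo$- and $st$-$uo$-convergence. The implication $(1)\Rightarrow(2)$ should follow from a known $un$-version of the $KB$ characterization; concretely, in an order continuous Banach lattice every $uo$-Cauchy sequence that is norm bounded is $uo$-convergent to some $x$ precisely when $E$ is a $KB$-space (Theorem 4.7 of \cite{GX:14}), and in an order continuous Banach lattice $uo$-convergence of a sequence implies $un$-convergence to the same limit, so $x_n\xrightarrow{un}x$. For $(2)\Rightarrow(3)$ I would argue that a norm bounded $uo$-Cauchy sequence is in particular statistical norm bounded and $st$-$uo$-Cauchy, and that the $un$-limit produced by $(2)$ is also the $st$-$un$-limit; more importantly one must check that a statistical norm bounded $st$-$uo$-Cauchy sequence agrees, off a density-zero set, with a norm bounded $uo$-Cauchy sequence, which is exactly the content of Lemma 3.5 (and its $uo$-analogue, Proposition 2.6) applied to the double-indexed net $(x_n-x_m)$. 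Thus after passing to a density-one set of indices we are back in the hypothesis of $(2)$, and the conclusion transfers back by Lemma 3.5.

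The substantive implication is $(3)\Rightarrow(1)$, and here I would argue by contraposition exactly as in Theorem 2.10. Assuming $E$ is order continuous but not a $KB$-space, $E$ contains a closed sublattice lattice isomorphic to $c_0$, and inside a copy of $c_0$ I would exhibit the same sequence used there:
\[
x_n=\begin{dcases}
ne_n, & n=k^2\ (k\in N);\\
\sum_{1}^{n}e_i, & \text{otherwise}.
\end{dcases}
\]
The point is that, off the density-zero set $\{k^2:k\in N\}$, the sequence coincides with the summing basis partial sums $s_n=\sum_1^n e_i$ in $c_0$, which is increasing and $uo$-Cauchy (indeed $uo$-convergent in $c_0$ is false, but $s_n-s_m=\sum_{m+1}^n e_i$ is $uo$-null as $n,m\to\infty$ since the blocks are disjoint and $uo$-convergence of disjoint sequences to $0$ holds by Corollary 3.6 of \cite{GTX:16}); hence $(x_n)$ is $st$-$uo$-Cauchy. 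One also checks $(x_n)$ is statistical norm bounded: the "bad" coordinates $ne_n$ have density zero, and on the complement the norms are bounded by $1$ in $c_0$. However $(x_n)$ is not $st$-$un$-convergent: any candidate limit would have to equal, off a density-zero set, the $un$-limit of $(s_n)$ in $c_0$, but $(s_n)$ is not $un$-convergent in $c_0$ (it is increasing and unbounded in norm, so by Proposition 3.6 it cannot be $st$-$un$-convergent either, as a monotone $st$-$un$-convergent net would have a supremum and be $un$-convergent). This contradicts $(3)$, so $E$ must be a $KB$-space.

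The main obstacle I anticipate is the bookkeeping in $(3)\Rightarrow(1)$: one has to verify carefully that the constructed $(x_n)$ is genuinely $st$-$uo$-Cauchy (not merely $st$-$un$-Cauchy) and genuinely statistical norm bounded, and that no modification on a density-zero set can rescue $st$-$un$-convergence — the clean way to see the last point is to invoke Lemma 3.5 to reduce to $un$-convergence of the summing basis in $c_0$, together with the monotone-agreement Proposition 3.6, which forces any $st$-$un$-limit of the increasing, norm-unbounded sequence $(s_n)$ to be an honest supremum in $c_0$, and $c_0$ has no such supremum. A secondary subtlety is making sure the embedded copy of $c_0$ is a sublattice so that the relevant lattice operations (finite suprema and the wedges $|x_n-x|\wedge u$) are computed the same way inside $E$ and inside $c_0$; this is standard once one recalls that $E$ not $KB$ yields a lattice-isomorphic copy of $c_0$ by the classical theorem.
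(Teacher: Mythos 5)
Your proposal is correct and follows essentially the same route the paper intends: the paper gives no separate proof here, deferring to Theorem 2.10, whose skeleton — $(1)\Rightarrow(2)$ from Theorem 4.7 of \cite{GX:14} (plus $uo$ implying $un$ under order continuity), $(2)\Rightarrow(3)$ by passing to a density-one index set, and $(3)\Rightarrow(1)$ by the $c_0$-counterexample built from the summing basis — is exactly what you reproduce. Your added care about why $\sup_n s_n$ cannot exist in $E$ and about the monotone agreement of $un$- and $st$-$un$-convergence only fills in details the paper leaves implicit.
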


According to Example 3.3, $st$-$un$ (resp. $st$-$uaw$, $st$-$uaw^{*}$)-convergence is not necessarily $st$-$n$ (resp. $st$-$aw$, $st$-$aw^{*}$)-
convergence. It is natural to ask when the $st$-$un$ (resp. $st$-$uaw$, $st$-$uaw^{*}$)-convergence imply $st$-$n$ (resp. $st$-$aw$, $st$-$aw^{*}$)-
convergence? It is clear that the $st$-$un$ (resp. $st$-$uaw$, $st$-$uaw^{*}$)-convergence is the same as $st$-$n$(resp. $st$-$aw$, $st$-$aw^{*}$)-
convergence for statistical order bounded lower finite net.

By Theorem 2.3 of \cite{KMT:16}, the following holds.
\begin{proposition}\label{}
Let $E$ be a Banach lattice with strong order unit, then the lower finite net $(x_\alpha)$ in $E$ is $st$-norm convergent if and only if it is $st$-$un$-convergent.	
\end{proposition}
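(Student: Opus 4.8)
The plan is to prove the two implications separately; only the reverse direction uses the order unit.

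The implication ``$st$-$n$-convergent $\Rightarrow$ $st$-$un$-convergent'' needs no hypothesis on $E$ whatsoever. For every $u\in E_+$ one has $|x_\alpha-x|\wedge u\le|x_\alpha-x|$, hence $\bignorm{|x_\alpha-x|\wedge u}\le\norm{x_\alpha-x}$, so that for each $\epsilon>0$ and each $u\in E_+$
$$\bigl\{\alpha\in D:\bignorm{|x_\alpha-x|\wedge u}\ge\epsilon\bigr\}\subseteq\bigl\{\alpha\in D:\norm{x_\alpha-x}\ge\epsilon\bigr\}.$$
If $x_\alpha\xrightarrow{st-n}x$ the right-hand set has asymptotic density $0$ in $D$, so by monotonicity of $\delta(\,\cdot\,;D)$ the left-hand one does too; thus $x_\alpha\xrightarrow{st-un}x$ (to the same limit).

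For the converse, let $e$ be the strong order unit of $E$. The only analytic ingredient I would use is a uniform comparison at the level of the unit $e$: \emph{for each $\epsilon>0$ there is $\epsilon'>0$ such that $\bignorm{|y|\wedge e}<\epsilon'$ implies $\norm y<\epsilon$, for every $y\in E$}. This is precisely Theorem 2.3 of \cite{KMT:16} read quantitatively: on a Banach lattice with strong order unit the $un$-topology coincides with the norm topology, and since $|y|\wedge u\le\lambda_u\,(|y|\wedge e)$ for a suitable constant $\lambda_u\ge1$ depending only on $u$, the sets $\{y:\bignorm{|y|\wedge e}<\epsilon'\}$ already form a base of $un$-neighbourhoods of $0$. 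Granting this, fix $\epsilon>0$, choose the corresponding $\epsilon'>0$, and apply the definition of $st$-$un$-convergence with $u=e$: contraposing the comparison gives
$$\bigl\{\alpha\in D:\norm{x_\alpha-x}\ge\epsilon\bigr\}\subseteq\bigl\{\alpha\in D:\bignorm{|x_\alpha-x|\wedge e}\ge\epsilon'\bigr\},$$
whose right-hand side has density $0$ because $x_\alpha\xrightarrow{st-un}x$. Hence $\delta\bigl(\{\alpha\in D:\norm{x_\alpha-x}\ge\epsilon\};D\bigr)=0$ for every $\epsilon>0$, i.e.\ $x_\alpha\xrightarrow{st-n}x$.

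The only non-routine point is this comparison estimate, and that is the step I expect to require care. If one prefers not to invoke \cite{KMT:16}, it can be produced by hand: pass to the order-unit norm $\norm{y}_e=\inf\{\lambda>0:|y|\le\lambda e\}$; then $\norm{y}\le\norm{e}\,\norm{y}_e$ since $\norm{\cdot}$ is a lattice norm, and $(E,\norm{\cdot}_e)$ is complete (a $\norm{\cdot}_e$-Cauchy sequence is $\norm{\cdot}$-Cauchy and its $\norm{\cdot}$-limit is again its $\norm{\cdot}_e$-limit because order intervals are norm closed), so the open mapping theorem supplies constants $m,M>0$ with $m\norm{y}\le\norm{y}_e\le M\norm{y}$. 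Then use the elementary Riesz-space fact that for $y\ge0$ and $0<\delta<1$, $y\wedge e\le\delta e$ forces $y\le\delta e$ — indeed $(y-\delta e)^+\wedge(1-\delta)e=(y\wedge e-\delta e)^+=0$, so $(y-\delta e)^+$ is disjoint from $e$ and hence vanishes, $e$ being a strong order unit. Putting these together, if $\bignorm{|y|\wedge e}<\epsilon'$ with $M\epsilon'<1$ then $|y|\wedge e\le\norm{|y|\wedge e}_e\,e\le M\epsilon'\,e$, hence $|y|\le M\epsilon'\,e$ and $\norm{y}\le M\epsilon'/m$; choosing $\epsilon'<\min\{1/M,\,m\epsilon/M\}$ completes the estimate. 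Alternatively, one may combine \cite[Theorem 2.3]{KMT:16} (so that $un$-convergence and norm convergence agree on $E$) with the subnet description of statistical convergence used in the earlier proofs, extracting $B\subseteq D$ with $\delta(B;D)=1$ along which $(x_\alpha)$ is $un$-, hence norm-, convergent to $x$.
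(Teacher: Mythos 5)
Your proof is correct. For the record, the paper disposes of this proposition in a single line, simply citing Theorem 2.3 of \cite{KMT:16} (on a Banach lattice with a strong order unit the $un$-topology coincides with the norm topology) and leaving everything else implicit. Your argument rests on exactly the same key fact, but you supply the two pieces the paper skips: the transfer from the topological statement to the statistical one, via the inclusion of exceptional sets
$$\{\alpha\in D:\norm{x_\alpha-x}\ge\epsilon\}\subseteq\{\alpha\in D:\bignorm{|x_\alpha-x|\wedge e\bigr\rVert}\ge\epsilon'\}$$
together with monotonicity of the density, and a self-contained proof of the quantitative comparison itself. That second part is sound: the equivalence of $\norm{\cdot}$ with the order-unit norm $\norm{\cdot}_e$ follows from the open mapping theorem as you say, the identity $(y-\delta e)^+\wedge(1-\delta)e=(y\wedge e-\delta e)^+$ is the standard lattice identity $(a-c)^+\wedge(b-c)^+=(a\wedge b-c)^+$ with $a=y$, $b=e$, $c=\delta e$, and disjointness from the strong unit $e$ forces $(y-\delta e)^+=0$. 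So your write-up is strictly more informative than the paper's; the only caveat is that the easy direction, as you note, uses nothing about $E$, and the whole content of the proposition is the reverse implication, which you have handled in full.
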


By Lemma 2 of \cite{KMT:16}, every disjoint lower finite net is $uaw$-null, hence it is $st$-$uaw$-null, moreover it is $st$-$uaw^{*}$-null. Using the result, we can describe order continuous and reflxive Banach lattice by statistical unbounded convergence.
\begin{theorem}\label{}
Let $E$ be an Banach lattice:
	
$(1)$ $E$ has order continuous norm;
	
$(2)$ for any statistical norm bounded lower finite net $(x^{'}_\alpha)$ in $E^{'}$,if $x^{'}_\alpha\xrightarrow{st-uaw^{*}}0$,then $x^{'}_\alpha\xrightarrow{st-\sigma(E^{'},E)}0$;

$(3)$ for any statistical norm bounded lower finite net $(x^{'}_\alpha)$ in $E^{'}$,if $x^{'}_\alpha\xrightarrow{st-uaw^{*}}0$,then $x^{'}_\alpha\xrightarrow{st-|\sigma|(E^{'},E)}0$;

$(4)$ for any statistical norm bounded sequence $(x^{'}_n)$ in $E^{'}$,if $x^{'}_n\xrightarrow{st-uaw^{*}}0$,then $x^{'}_n\xrightarrow{st-\sigma(E^{'},E)}0$;
	
$(5)$ for any statistical norm bounded sequence $(x^{'}_n)$ in $E^{'}$,if $x^{'}_n\xrightarrow{st-uaw^{*}}0$,then $x^{'}_n\xrightarrow{st-|\sigma|(E^{'},E)}0$.

Then $(1)\Rightarrow(2)\Leftrightarrow(3)\Rightarrow(4)\Leftrightarrow(5)$ hold, and in addition, if $E$	has $STW^{*}C$ property, all are equivalent.
\end{theorem}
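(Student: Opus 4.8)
The plan is to mirror the proof of Theorem~2.12 line by line, replacing $st$-$uo$-convergence by $st$-$uaw^*$-convergence and invoking the fact, stated just before the theorem, that every disjoint lower finite net in $E'$ is $st$-$uaw^*$-null. First I would dispose of the trivial implications: the equivalences $(2)\Leftrightarrow(3)$ and $(4)\Leftrightarrow(5)$ follow from $x'_\alpha\xrightarrow{st\text{-}uaw^*}0\Leftrightarrow |x'_\alpha|\xrightarrow{st\text{-}uaw^*}0$ together with the definition of $st$-$|\sigma|(E',E)$-convergence in terms of $st$-$\sigma(E',E)$-convergence of $(|x'_\alpha-x'|\wedge u')$; and $(2)\Rightarrow(4)$ is immediate since a sequence is a lower finite net.

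For $(1)\Rightarrow(2)$ I would argue exactly as in Theorem~2.12's proof of $(1)\Rightarrow(2)$: given a statistical norm bounded lower finite net $(x'_\alpha)$ in $E'$ with $x'_\alpha\xrightarrow{st\text{-}uaw^*}0$, use the Cauchy-type lifting (Lemma~3.4 here, the $uaw^*$ analogue of Proposition~2.6, and Theorem~2.1 of \cite{G:14}) to extract a bounded subnet $(x'_\beta)$ whose index set has asymptotic density $1$ and which is genuinely $uaw^*$-null. Since $E$ has order continuous norm, on bounded sets $uaw^*$-convergence coincides with $|\sigma|(E',E)$-convergence, hence with $\sigma(E',E)$-convergence, so $x'_\beta\xrightarrow{\sigma(E',E)}0$; as this holds off a density-zero set, $x'_\alpha\xrightarrow{st\text{-}\sigma(E',E)}0$. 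This is the step where one must be slightly careful that the relevant order-continuity fact (that $uaw^*$ agrees with $|\sigma|$ on order bounded sets when $E$ is order continuous) is available — I would cite it from \cite{W:19} or the $uaw^*$ literature.

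For the closing implication $(4)\Rightarrow(1)$ under the $STW^*C$ hypothesis, take an arbitrary norm bounded disjoint sequence $(x'_n)$ in $E'$. By the disjointness observation it is $st$-$uaw^*$-null, and it is trivially statistical norm bounded, so by $(4)$ it is $st$-$\sigma(E',E)$-null; combined with norm boundedness and the $STW^*C$ property this forces $x'_n\xrightarrow{w^*}0$. Then Corollary~2.4.3 of \cite{MN:91} yields that $E$ has order continuous norm. I expect the main obstacle to be purely bookkeeping: confirming that the $uaw^*$-versions of the auxiliary results (the lattice-operation continuity of $uaw^*$-convergence used implicitly, and the subnet-extraction lemma) are legitimately in force, since the paper only states them in passing; once those are granted, the argument is a routine transcription of Theorem~2.12.
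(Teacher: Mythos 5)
Your proposal matches the paper's own proof essentially step for step: the trivial implications are handled via $x'_\alpha\xrightarrow{st\text{-}uaw^*}0\Leftrightarrow|x'_\alpha|\xrightarrow{st\text{-}uaw^*}0$, the implication $(1)\Rightarrow(2)$ is done by extracting a density-one subnet that is genuinely $uaw^*$-null and invoking the result from \cite{W:19} (the paper cites its Theorem 2.15) that order continuity upgrades this to $\sigma(E',E)$-convergence, and $(4)\Rightarrow(1)$ uses a norm bounded disjoint sequence in $E'$, the $STW^*C$ property, and Corollary 2.4.3 of \cite{MN:91}. This is exactly the paper's argument (transcribed from the $st$-$uo$ theorem in Section 2), so no further comparison is needed.
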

\begin{proof}
Since $x_\alpha^{'}\xrightarrow{st-uaw^{*}}0\Leftrightarrow|x_\alpha^{'}|\xrightarrow{st-uaw^{*}}0$, it is clear that $(2)\Leftrightarrow(3)\Rightarrow(4)\Leftrightarrow(5)$.

$(1)\Rightarrow(2)$ Suppose that $E$ has order continuous norm, by Theorem 2.15 in \cite{W:19}, for any statistical norm bounded lower finite net $(x^{'}_\alpha)$ and $x^{'}_\alpha\xrightarrow{st-uaw^{*}}0$ in $E^{'}$, there exists a subnet $(x^{'}_\beta)$ which asymptotic density of index set is $1$ satisfying $x^{'}_\beta\xrightarrow{uaw^{*}}0$,then $x^{'}_\beta\xrightarrow{\sigma(E^{'},E)}0$, hence $x^{'}_\alpha\xrightarrow{st-\sigma(E^{'},E)}0$.

$(4)\Rightarrow(1)$ For any norm bounded disjoint sequence $(x_n^{'})$ in $E^{'}$, from the above, we know it is statistical norm bounded and $st$-$uaw^{*}$-null, so $x^{'}_n\xrightarrow{st-\sigma(E^{'},E)}0$. Since $E$ has $STW^{*}C$ property, $(x_n^{'})$ is weak* convergent to zero. By Corollary 2.4.3 of \cite{MN:91}, $E$ has order continuous norm.
\end{proof}
The following result is the dual version of Theorem 3.8.
\begin{theorem}\label{}
Let $E$ be an Banach lattice:
	
$(1)$ $E^{'}$ has order continuous norm;
	
$(2)$ for any statistical norm bounded lower finite net $(x^{'}_\alpha)$ in $E^{'}$,if $x_\alpha\xrightarrow{st-uaw}0$,then $x_\alpha\xrightarrow{st-\sigma(E,E^{'})}0$;

$(3)$ for any statistical norm bounded lower finite net $(x^{'}_\alpha)$ in $E^{'}$,if $x_\alpha\xrightarrow{st-uaw}0$,then $x_\alpha\xrightarrow{st-|\sigma|(E,E^{'})}0$;

$(4)$ for any statistical norm bounded sequence $(x_n)$ in $E$,if $x_n\xrightarrow{st-uaw}0$,then $x_n\xrightarrow{st-\sigma(E,E^{'})}0$;
	
$(5)$ for any statistical norm bounded sequence $(x_n)$ in $E$,if $x_n\xrightarrow{st-uaw}0$,then $x_n\xrightarrow{st-|\sigma|(E,E^{'})}0$.

Then $(1)\Rightarrow(2)\Leftrightarrow(3)\Rightarrow(4)\Leftrightarrow(5)$ hold, and in addition, if $E$	has $STWC$ property, all are equivalent.
\end{theorem}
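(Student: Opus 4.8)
The plan is to run the proof of Theorem~3.8 verbatim with the roles of $E$ and $E'$ interchanged: replace the weak* topology $\sigma(E',E)$ by the weak topology $\sigma(E,E')$, replace $uaw^{*}$- by $uaw$-convergence, replace the $STW^{*}C$ property by the $STWC$ property, and replace ``$E$ has order continuous norm'' by ``$E'$ has order continuous norm'' in each external input. First, the cheap implications. As in Proposition~3.2(3),(4) one has $x_\alpha\xrightarrow{st-uaw}0$ iff $|x_\alpha|\xrightarrow{st-uaw}0$; and for a positive lower finite net, $st$-$\sigma(E,E')$-convergence to $0$ coincides with $st$-$|\sigma|(E,E')$-convergence to $0$, since $|f|(|x_\alpha|)$ dominates $|f(x_\alpha)|$ and $|f|\in E'_+$. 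Hence, assuming (2), for a statistical norm bounded $(x_\alpha)$ with $x_\alpha\xrightarrow{st-uaw}0$, applying (2) to $(|x_\alpha|)$ gives $|x_\alpha|\xrightarrow{st-\sigma(E,E')}0$, i.e. $x_\alpha\xrightarrow{st-|\sigma|(E,E')}0$; this is $(2)\Rightarrow(3)$, and $(3)\Rightarrow(2)$ is trivial because $|\sigma|(E,E')$ is finer than $\sigma(E,E')$. The same argument restricted to sequences gives $(4)\Leftrightarrow(5)$, and $(3)\Rightarrow(4)$ is immediate since every sequence is a lower finite net. So $(2)\Leftrightarrow(3)\Rightarrow(4)\Leftrightarrow(5)$.

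Next, $(1)\Rightarrow(2)$. Suppose $E'$ has order continuous norm and let $(x_\alpha)$ be a statistical norm bounded lower finite net in $E$ with $x_\alpha\xrightarrow{st-uaw}0$. Exactly as in the proof of Theorem~2.9 (using Theorem~2.1 of \cite{G:14}) and of Theorem~3.8, extract an index subset $B\subset D$ with $\delta(B;D)=1$ along which the net is norm bounded and genuinely $uaw$-convergent to $0$. Since $E'$ is order continuous, on norm bounded sets $uaw$-convergence implies weak convergence (the $uaw$-counterpart of Theorem~2.15 of \cite{W:19}, which is available in \cite{DOT:16}); hence $x_\beta\xrightarrow{\sigma(E,E')}0$ along $B$, and therefore $x_\alpha\xrightarrow{st-\sigma(E,E')}0$.

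Finally, assuming $E$ has the $STWC$ property, I would prove $(4)\Rightarrow(1)$. Let $(x_n)$ be an arbitrary norm bounded disjoint sequence in $E$. It is statistical norm bounded, and by Lemma~2 of \cite{KMT:16} it is $uaw$-null, hence $st$-$uaw$-null. By (4), $x_n\xrightarrow{st-\sigma(E,E')}0$, that is $x_n\xrightarrow{st-w}0$; since $E$ has the $STWC$ property and $(x_n)$ is statistical norm bounded, $x_n\xrightarrow{w}0$. As $(x_n)$ was an arbitrary norm bounded disjoint sequence in $E$, the disjoint-sequence characterization of order continuity of the dual --- the $E'$-version of Corollary~2.4.3 of \cite{MN:91}, namely that $E'$ has order continuous norm iff every norm bounded disjoint sequence in $E$ is weakly null --- shows that $E'$ has order continuous norm. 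Combined with $(1)\Rightarrow(2)\Rightarrow(4)$, all five statements are equivalent.

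The step I expect to be the main obstacle is the passage in $(1)\Rightarrow(2)$: one must verify that the extraction of a density-$1$ subnet along which the net is simultaneously bounded and honestly $uaw$-null is legitimate for lower finite nets, not just sequences (this is the content of Lemma~3.5 together with the remarks on $a.a.\alpha$ and statistical norm boundedness), and that the norm-bounded comparison ``$uaw\Rightarrow w$ when $E'$ is order continuous'' is quoted correctly --- unlike the order bounded case this genuinely requires order continuity of $E'$, as it already fails in $\ell^{1}$. The other point to check carefully is that the classical input in $(4)\Rightarrow(1)$ is available in precisely the form ``$E'$ order continuous $\iff$ norm bounded disjoint sequences in $E$ are weakly null''; this is standard but should be matched against \cite{MN:91}.
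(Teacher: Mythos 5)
Your proposal is correct and is exactly the argument the paper intends: the paper gives no separate proof of this theorem, declaring it ``the dual version of Theorem 3.8,'' and your dualization (swapping $E$ and $E'$, $uaw^{*}$ for $uaw$, $STW^{*}C$ for $STWC$, and using the characterization that $E'$ is order continuous iff norm bounded disjoint sequences in $E$ are weakly null) is precisely that. The only detail to adjust is the attribution of the key input in $(1)\Rightarrow(2)$: the fact that a norm bounded $uaw$-null net is weakly null when $E'$ is order continuous is Theorem 7 of \cite{O:16} rather than \cite{DOT:16}, but the mathematics is unaffected.
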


\begin{corollary}\label{}
For Banach lattice $E$, then $st$-$uaw$-convergence and $st$-$w$-convergence are agree for sequence, if one of the following conditions is valid:

$(1)$ $E$ is $AM$-space;

$(2)$ $E$ is atomic, $E$ and $E^{'}$ is order continuous.	
\end{corollary}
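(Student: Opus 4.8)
The plan is to obtain the corollary from Theorem~3.9 and Lemma~3.4, once it is observed that each of the two listed conditions forces $E'$ to have order continuous norm and forces $E$ to have weakly sequentially continuous lattice operations on norm bounded sequences; as in Theorems~3.8 and~3.9 the comparison is made among statistically norm bounded sequences, and ``agree'' means the two implications ``$st$-$uaw$-convergent $\Rightarrow$ $st$-$w$-convergent'' and ``$st$-$w$-convergent $\Rightarrow$ $st$-$uaw$-convergent'' (to the same limit).

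First I would record the structural facts. If $E$ is an $AM$-space then $E'$ is an $AL$-space, hence has order continuous norm; and since a norm bounded sequence in $C(K)$, hence in any closed sublattice, converges weakly if and only if it converges pointwise (one direction via Dirac measures, the other by dominated convergence against Radon measures) and the lattice operations are pointwise, $AM$-spaces have weakly sequentially continuous lattice operations. If $E$ is atomic with $E$ and $E'$ order continuous, then order continuity of $E'$ is given; moreover atomicity together with order continuity of $E$ makes the atom functionals separate points of $E$ and makes every order interval $[0,u]$ of $E$ weakly compact, so for a norm bounded sequence $(x_n)$ both $x_n\xrightarrow{w}0$ and $|x_n|\wedge u\xrightarrow{w}0$ (for all $u\in E_+$) are equivalent to coordinatewise nullity — weak convergence forces coordinatewise convergence, and a coordinatewise null sequence inside the weakly compact interval $[0,u]$ is weakly null by Eberlein--\v{S}mulian — and in particular the lattice operations are again weakly sequentially continuous on bounded sequences.

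For the first implication: since $E'$ has order continuous norm, a statistically norm bounded sequence with $x_n\xrightarrow{st-uaw}x$ satisfies $x_n\xrightarrow{st-\sigma(E,E')}x$ by the implication $(1)\Rightarrow(4)$ of Theorem~3.9 (applied to $x_n-x$ via Proposition~3.2(3) and linearity of functionals), which is $x_n\xrightarrow{st-w}x$. For the converse, let $(x_n)$ be statistically norm bounded with $x_n\xrightarrow{st-w}0$. By the same extraction argument used in the proofs of Theorems~2.11 and~3.9, statistical norm boundedness yields an index set $K$ with $\delta(K)=1$ along which $(x_n)_{n\in K}$ is norm bounded and genuinely $\sigma(E,E')$-null. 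By the structural facts above, along $K$ one then has $|x_n|\wedge u\xrightarrow{w}0$ for every $u\in E_+$, so $(x_n)_{n\in K}$ is $uaw$-null; since $x_n$ coincides with this net for a.a. $n$, Lemma~3.4 gives $x_n\xrightarrow{st-uaw}0$. Both directions hold under either hypothesis, which proves the corollary.

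The step I expect to be the main obstacle is the converse direction, on two counts. First, passing from statistical weak convergence to genuine weak convergence along a density-one index set: the weak topology need not be first countable, so one restricts to a norm bounded subset of the separable closed linear span of $(x_n)$, where the weak topology is metrizable, and then applies the characterization of statistical convergence in metric spaces that underlies the cited extraction results. Second, in case~(2) one must check carefully that ``$E$ atomic, $E$ and $E'$ order continuous'' genuinely supplies the coordinatewise/weak comparison on bounded sequences; this is exactly where order continuity of \emph{both} $E$ and $E'$ is used, excluding the $\ell^{1}$- and $\ell^{\infty}$-type obstructions. In case~(1) the weak sequential continuity of the lattice operations is classical, and everything else reduces to the density bookkeeping already carried out in the earlier results.
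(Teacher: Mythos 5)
Your proposal follows essentially the same route as the paper: one direction via order continuity of $E'$ (Theorem 3.9, $E'$ being an $AL$-space in case (1)), the other via weak sequential continuity of the lattice operations in $AM$-spaces and in atomic order continuous lattices, with the statistical bookkeeping handled by the density-one extraction of Lemma 3.4. You supply considerably more detail than the paper does (in particular on the passage from statistical weak convergence to genuine weak convergence along a density-one set, a point the paper silently glosses over), but the underlying argument is the same.
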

\begin{proof}
Since $AM$-space and atomic Banach lattice with order continuous norm have lattice operation weakly sequence continuous property, so $st$-$w$-convergence implies $st$-$uaw$-convergence. For the reverse, the dual space of $AM$-space is $AL$-space which is order continuous, hence $st$-$uaw$-convergence implies $st$-$w$-convergence.
\end{proof}

\begin{theorem}\label{}
For Banach lattice $E$, the following are equivalent:

$(1)$ $E$ is reflexive;

$(2)$ every statistical norm bounded $st$-$uaw$-Cauchy sequence in $E$ is statistical weakly convergent;

$(3)$ every statistical norm bounded $st$-$un$-Cauchy sequence in $E$ is statistical weakly convergent;	

$(4)$ every statistical norm bounded $st$-$uo$-Cauchy sequence in $E$ is statistical weakly convergent.		
\end{theorem}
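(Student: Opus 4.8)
The plan is to establish the cycle $(1)\Rightarrow(2)\Rightarrow(3)\Rightarrow(4)\Rightarrow(1)$, using two recurring devices. The first is the passage to a density-one index set already exploited in Proposition 2.6, Proposition 2.7, Lemma 3.4 and Proposition 3.5 (and resting on Theorem 2.1 of \cite{G:14}): a statistical norm bounded sequence, a $st$-$uo/un/uaw$-Cauchy sequence, and a $st$-$w$-convergent sequence restrict, on some $K\subseteq\mathbb N$ with $\delta(K)=1$, to an honestly bounded, a genuinely $uo/un/uaw$-Cauchy, and a weakly convergent subsequence respectively. The second is the structural description of reflexivity: $E$ is reflexive iff $E$ and $E'$ are both $KB$-spaces, equivalently iff $E$ contains no sublattice isomorphic to $c_0$ and none isomorphic to $\ell^1$ (\cite{MN:91}). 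With these, each statistical assertion reduces to its non-statistical analogue.

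For $(1)\Rightarrow(2)$ I would use that a reflexive $E$ is order continuous. Given $(x_n)$ statistical norm bounded and $st$-$uaw$-Cauchy, pass to $K$ with $\delta(K)=1$ along which $(x_n)_{n\in K}$ is norm bounded and $uaw$-Cauchy; the classical fact to invoke is that in a reflexive Banach lattice every norm bounded $uaw$-Cauchy sequence is weakly convergent. One proves this by extracting, via Eberlein--\v{S}mulian, weakly convergent subsequences and checking that their weak limits all coincide: the difference of any two such subsequences is at once $uaw$-null and weakly convergent, and one argues it is weakly null using order continuity together with the behaviour of $x\mapsto\lvert x\rvert\wedge u$. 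Then $(x_n)_{n\in K}$ is weakly convergent to some $x$, so $x_n\xrightarrow{st-w}x$. The step $(2)\Rightarrow(3)$ is the inclusion of hypotheses: $f(\lvert z\rvert\wedge u)\le\norm{f}\,\norm{\lvert z\rvert\wedge u}$ for $f\in E'_+$, so every $st$-$un$-Cauchy sequence is $st$-$uaw$-Cauchy. For $(3)\Rightarrow(4)$ I would first note that (3) already forces $E$ to be order continuous (via the $c_0/\ell^1$ argument of $(4)\Rightarrow(1)$ applied with $un$-Cauchy test sequences), after which $st$-$uo$-Cauchy implies $st$-$un$-Cauchy and (4) follows.

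For $(4)\Rightarrow(1)$ I would argue contrapositively. If $E$ is not reflexive it contains a sublattice isomorphic to $\ell^1$ or to $c_0$. In the first case the image $(v_n)$ of the $\ell^1$-basis is a norm bounded disjoint sequence, hence $uo$-null by Corollary 3.6 of \cite{GTX:16}, so it is statistical norm bounded and $st$-$uo$-Cauchy; were it $st$-$w$-convergent, a subsequence would converge weakly inside a copy of $\ell^1$, which by the Schur property would force norm convergence of a sequence whose members are pairwise at distance $2$, a contradiction. In the second case pick a positive disjoint $(v_n)$ with $\inf_n\norm{v_n}>0$ and $\sup_n\norm{v_1+\dots+v_n}<\infty$ and put $s_n=v_1+\dots+v_n$; then $(s_n)$ is increasing, norm bounded, and using the identity $\bigl(\sum_{q<k\le p}v_k\bigr)\wedge u=\sum_{q<k\le p}(v_k\wedge u)$ together with the invariance of $uo$-convergence under passing to the Dedekind completion (where the dominating suprema exist) one checks $(s_n)$ is $uo$-Cauchy, hence statistical norm bounded and $st$-$uo$-Cauchy. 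Since the $v_n$ span a sublattice isomorphic to $c_0$ in which $(s_n)$ is the summing sequence, $(s_n)$ is weakly Cauchy but not weakly convergent in $E$; as a weakly Cauchy, $st$-$w$-convergent sequence is weakly convergent, $(s_n)$ is not $st$-$w$-convergent. In either case (4) fails.

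The step I expect to be the main obstacle is the classical ingredient behind $(1)\Rightarrow(2)$: showing that a norm bounded, $uaw$-null, weakly convergent sequence in a reflexive (hence order continuous) Banach lattice has weak limit $0$. That is the one place where the weak topology, the nonlinear truncation $x\mapsto\lvert x\rvert\wedge u$, and order continuity genuinely have to be combined. A secondary difficulty is the order-continuity step inside $(3)\Rightarrow(4)$, namely producing, in an arbitrary non-reflexive $E$, a statistical norm bounded $st$-$un$-Cauchy sequence that is not $st$-$w$-convergent; the remaining pieces are the now-routine density-one translations and the disjointness characterisations of $uo$-null (Corollary 3.6 of \cite{GTX:16}) and $uaw$-null (Lemma 2 of \cite{KMT:16}).
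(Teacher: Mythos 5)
Your overall strategy --- reduce each statistical statement to its classical analogue on a density-one index set, characterise reflexivity by the absence of lattice copies of $c_0$ and $\ell^1$, and refute the statistical statements with disjoint and summing sequences inside such copies --- is the same as the paper's, which proves $(1)\Leftrightarrow(2)$, $(2)\Rightarrow(3)\Rightarrow(1)$ and $(1)\Leftrightarrow(4)$ by exactly these devices (the sequences of Example 2.5 and of the proof of Theorem 2.10 are the ``statistically perturbed'' versions of your $(v_n)$ and $(s_n)$, and your classical ingredient for $(1)\Rightarrow(2)$ is available in \cite{O:16}). Your $(1)\Rightarrow(2)$, $(2)\Rightarrow(3)$ and $(4)\Rightarrow(1)$ are fine, and in places more careful than the paper's.

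The genuine gap is the step you yourself flag as a ``secondary difficulty'': getting from statement (3) back to order continuity, and hence to (4) or to (1). The test sequences available in a lattice copy of $c_0$ or $\ell^1$ --- a disjoint sequence, or the summing sequence of an order bounded disjoint sequence --- are $uaw$-null or $uaw$-Cauchy by Lemma 2 of \cite{KMT:16}, but they need \emph{not} be $un$-Cauchy unless the ambient space is already order continuous: if $0\le v_k\le u$ are disjoint with $\Vert v_k\Vert\ge\varepsilon$, then $\vert s_p-s_q\vert\wedge u=\sum_{q<k\le p}v_k\ge v_{q+1}$, so $\bigl\Vert\vert s_p-s_q\vert\wedge u\bigr\Vert\ge\varepsilon$ and $(s_n)$ is not $un$-Cauchy, and likewise $\vert v_n\vert\wedge u=v_n$ is not $un$-null. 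Worse, the implication $(3)\Rightarrow(1)$ (equivalently your $(3)\Rightarrow(4)$) appears to be false as stated: in $E=\ell^\infty$ the $un$-topology coincides with the norm topology (Theorem 2.3 of \cite{KMT:16}, i.e.\ Proposition 3.7 of this paper), so every statistical norm bounded $st$-$un$-Cauchy sequence is statistically norm convergent and a fortiori statistically weakly convergent; thus (3) holds in $\ell^\infty$, which is neither reflexive nor order continuous, while (4) genuinely fails there (the sequence $x_n=\chi_{[n,\infty)}$ is norm bounded and $uo$-Cauchy but not weakly convergent, as any ultrafilter functional concentrated at infinity shows). So your cycle cannot be closed at $(3)\Rightarrow(4)$. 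To be fair, the paper's own treatment of $(3)\Rightarrow(1)$ (``the proof is similar to $(2)\Rightarrow(1)$'') silently commits the same error, so the defect lies in the theorem itself rather than only in your argument; but as written your proposal does not establish the stated equivalence.
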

\begin{proof}
$(1)\Rightarrow(2)$ Since $E$ is reflexive, then $E$ and $E^{'}$ are $KB$ space, so we have every statistical norm bounded $st$-$uaw$-Cauchy sequence in $E$ is statistical weakly Cauchy. Since $E$ is $KB$-space, so it is statistical weakly convergent.

$(2)\Rightarrow(1)$ We show that $E$ contains no lattice copies of $c_0$ or $l_1$. Suppose that $E$ contains a sublattice isomorphic to $l_1$. The $(x_n)$ of $l_1$ which is in Example 2.5, it is $st$-$uaw$ convergent to zero and statistical norm bounded but it is not statistical weakly convergence.
Suppose that $E$ contains a sublattice isomorphic to $c_0$, the $(x_n)$ of $c_0$ which is in the proof of Theorem 2.10, it is $st$-$uaw$-Cauchy sequence, but it is not weakly convergence, so we have the conclusion. 

$(2)\Rightarrow(3)$ Since $un$-convergence implies $uaw$-convergence, so we have the conclusion.

$(3)\Rightarrow(1)$ $E$ contains no sublattice isomorphic to $c_0$ or $l_1$, the proof is similarily to $(2)\Rightarrow(1)$.

$(1)\Leftrightarrow(4)$ the proof is similar to $(1)\Leftrightarrow(2)$.
\end{proof}

\end{document}